\font\bbbld=msbm10 scaled\magstephalf
\newcommand{\bi}{\bar{i}}
\newcommand{\bj}{\bar{j}}
\newcommand{\bk}{\bar{k}}
\newcommand{\bl}{\bar{l}}
\newcommand{\bm}{\bar{m}}
\newcommand{\bn}{\bar{n}}
\newcommand{\bq}{\bar{q}}
\newcommand{\bz}{\bar{z}}
\newcommand{\bM}{\bar{M}}
\newcommand{\balpha}{\bar{\alpha}}
\newcommand{\bzeta}{\bar{\zeta}}
\newcommand{\bpartial}{\bar{\partial}}
\newcommand{\fg}{\mathfrak{g}}
\newcommand{\fI}{\mathfrak{I}}
\newcommand{\fRe}{\mathfrak{Re}}
\newcommand{\bfG}{\hbox{\bbbld G}}
\newcommand{\bfN}{\hbox{\bbbld N}}
\newcommand{\bfR}{\hbox{\bbbld R}}
\newcommand{\cC}{\mathcal{C}}
\newcommand{\cP}{\mathcal{P}}
\newcommand{\tr}{\mbox{tr}}
\newcommand{\va}{{\bf a}}
\newcommand{\ol}{\overline}
\newcommand{\ul}{\underline}
\newtheorem{theorem}{Theorem}[section]
\newtheorem{lemma}[theorem]{Lemma}
 \theoremstyle{definition}
\newtheorem{definition}[theorem]{Definition}
\theoremstyle{remark}
\newtheorem{remark}[theorem]{Remark}
\numberwithin{equation}{section}
\begin{document}

\title[fully nonlinear elliptic equations]
{Fully nonlinear elliptic equations on Hermitian manifolds for symmetric functions \\
of partial Laplacians}
\author{Mathew George}
\address{Department of Mathematics, Ohio State University,
         Columbus, OH 43210, USA}
\email{george.924@buckeyemail.osu.edu}
\author{Bo Guan}
\address{Department of Mathematics, Ohio State University,
         Columbus, OH 43210, USA}
\email{guan@math.ohio-state.edu}
\author{Chunhui Qiu}
\address{School of Mathematical Sciences, Xiamen University,  Xiamen, 361005, China}
\email{chqiu@xmu.edu.cn}
\thanks{The third author was partially supported by NSFC (Grant No. 11971401).}
\date{}

\begin{abstract}
We consider a class of fully nonlinear second order elliptic equations on Hermitian manifolds closely related to the general notion of $\bfG$-plurisubharmonicity of Harvey-Lawson 
and an equation treated by Sz\'ekelyhidi-Tosatti-Weinkove in the proof of Gauduchon conjecture.
Under fairly general assumptions we derive interior estimates and establish the existence of smooth solutions for the Dirichlet problem as well as for equations on closed manifolds.

{\em Mathematical Subject Classification (MSC2020):}
35J15, 35J60, 58J05.
\end{abstract}

\maketitle

\section{Introduction}

\medskip

In a remarkable paper where the Gauduchon conjecture which extends Calabi-Yau Theorem 
to non-K\"ahler metrics was proved,
Sz\'ekelyhidi-Tosatti-Weinkove~\cite{STW17} solved the Monge-Amp\`ere type equation
\begin{equation}
\label{ggq-I10}
  (\Delta u \omega - \sqrt{-1} \partial \bpartial u +  \chi (\partial u, \bpartial u))^n 
      = \psi \omega^n  
\end{equation}
on a compact Hermitian manifold $(M^n, \omega)$, where for a function $u \in C^{2} (M)$, $\chi (\partial u, \bpartial u)$ is a real $(1,1)$ form which depends on
$\partial u$, $\bpartial u$ linearly and satisfies some additional assumptions
(which for convenience we shall refer as {\em STW conditions}). 
This equation also has close connections to the form-type Monge-Amp\`ere equations studied by
Fu-Wang-Wu~\cite{FWW11, FWW15}, and the notion of $n-1$ plurisubharmonic functions
in the sense of  Harvey-Lawson~\cite{HL11, HL12, HL13}.

In the same paper \cite{STW17}, the authors also treated more general equations of the form 
\begin{equation}
\label{ggq-I20}
  f (\lambda( \Delta u \omega - \sqrt{-1} \partial \bpartial u +  \chi (\partial u, \bpartial u))
      = \psi   
\end{equation}
under the same (STW) conditions on $\chi$, where $\lambda(U)$ denotes the eigenvalues of a real $(1, 1)$ form $U$ with respect to $\omega$, and 
$f$ is a symmetric function of $n$ variables satisfying the structure conditions of 
 Caffarelli-Nirenberg-Spruck~\cite{CNS3}, i.e.
$f$ is assumed to be defined in a symmetric
open and convex cone $\Gamma \subset \bfR^n$ with vertex
at the origin, 
\begin{equation}
\label{3I-15}
\Gamma_n = \{\lambda \in \bfR^n: \lambda_i > 0\} \subset\Gamma,
\end{equation}
and satisfies the conditions
\begin{equation}
\label{3I-20}
f_i 
 \equiv \frac{\partial f}{\partial \lambda_i} > 0 \;\;
\mbox{in $\Gamma$}, \;\; 1 \leq i \leq n,
\end{equation}
\begin{equation}
\label{3I-30}
\mbox{$f$ is a concave function in $\Gamma$},
\end{equation}
\begin{equation}
\label{3I-40}
 \sup_{\partial \Gamma} f < \inf_M \psi 
\end{equation}
and, in addition
\begin{equation}
\label{3I-45}
\lim_{t \rightarrow \infty} f (t \lambda) = \sup_{\Gamma} f, \;\; \forall \; \lambda \in \Gamma. 
\end{equation}

There are other important fully nonlinear equations of second order in complex geometry with explicit dependence on the solution and its gradient,
such as the Fu-Yau equation~\cite{FY07, FY08} arising from the Strominger system in superstring theory and its higher dimension extensions studied by Phong-Picard-Zhang~\cite{PPZ16, PPZ17, PPZ21} and Chu-Huang-Zhu~\cite{CHZ19}. 
In general, the presence of these terms and the gradients especially, cause substantial difficulties in solving the equations, even in the case of linear dependence for the Monge-Amp\`ere type equation~\eqref{ggq-I10}.


It is clear that Sz\'ekelyhidi-Tosatti-Weinkove \cite{STW17}  primarily focused on 
equation~\eqref{ggq-I10} in order to prove the Gauduchon conjecture. It seems worthwhile to take a closer look at equation~\eqref{ggq-I20} from different angles, and from a more PDE point of view. This is one of our main purposes 
in the current paper. 
We shall assume 
in place of \eqref{3I-20} that $f$ satisfies 
the weaker condition
\begin{equation}
\label{3I-20w}
f_i \equiv 
    \frac{\partial f}{\partial \lambda_i} \geq 0 \;\;
\mbox{in $\Gamma$}, \;\; 1 \leq i \leq n.
\end{equation}

In order to describe our results, we recall some notions from \cite{Guan14} and 
\cite{GN}.
For a fixed real number $\sigma \in (\sup_{\partial \Gamma} f, \sup_{\Gamma} f)$
define
\[ \Gamma^{\sigma} = \{\lambda \in \Gamma: f (\lambda) > \sigma\}. \]

\begin{lemma}
\label{ggq-lemma-I0}
 Under conditions \eqref{3I-20w} and \eqref{3I-30}, the level hypersurface of $f$
 \[ \partial \Gamma^{\sigma} = \{\lambda \in \Gamma: f (\lambda) = \sigma\}, \]
which is the boundary of $\Gamma^{\sigma}$, is smooth and convex. 
\end{lemma}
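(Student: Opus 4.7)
The plan is to argue in three steps: convexity of the sublevel region, nondegeneracy of $\nabla f$ on the level set, and then smoothness plus convexity of $\partial\Gamma^{\sigma}$ as a hypersurface.

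First I would verify that $\Gamma^{\sigma}$ itself is an open convex subset of $\Gamma$. Openness is immediate from the continuity of $f$. Convexity is a standard consequence of concavity \eqref{3I-30}: if $\lambda,\mu\in\Gamma^{\sigma}$ and $t\in[0,1]$, then $(1-t)\lambda+t\mu\in\Gamma$ (since $\Gamma$ is convex) and
\[
f((1-t)\lambda+t\mu)\geq(1-t)f(\lambda)+tf(\mu)>\sigma,
\]
so the segment lies in $\Gamma^{\sigma}$. Hence $\Gamma^{\sigma}$ is an open convex set, and its topological boundary in $\Gamma$ agrees with $\{f=\sigma\}$ since $\sigma$ is strictly between $\sup_{\partial\Gamma}f$ and $\sup_{\Gamma}f$ (so the level set is nonempty and separates the two regions).

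The crucial point is to show $\nabla f(\lambda_{0})\neq 0$ for every $\lambda_{0}\in\partial\Gamma^{\sigma}$; this is where I expect the only real subtlety, because \eqref{3I-20w} only gives $f_{i}\geq 0$ rather than strict positivity, so we cannot read off $\nabla f\neq 0$ from monotonicity alone. The remedy is to use concavity as a substitute: if $\nabla f(\lambda_{0})=0$, then by the supporting hyperplane inequality for concave functions
\[
f(\lambda)\leq f(\lambda_{0})+\nabla f(\lambda_{0})\cdot(\lambda-\lambda_{0})=f(\lambda_{0})\quad\text{for all }\lambda\in\Gamma,
\]
so $\lambda_{0}$ is a global maximizer and $f(\lambda_{0})=\sup_{\Gamma}f>\sigma$, contradicting $\lambda_{0}\in\partial\Gamma^{\sigma}$. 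Thus $\nabla f\neq 0$ on the level set, and the implicit function theorem immediately gives that $\partial\Gamma^{\sigma}$ is a smooth hypersurface.

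For convexity of the hypersurface, I would simply invoke the fact that $\partial\Gamma^{\sigma}$ is the (smooth) boundary of the open convex set $\Gamma^{\sigma}$: at any $\lambda_{0}\in\partial\Gamma^{\sigma}$ the outward unit normal is $-\nabla f(\lambda_{0})/|\nabla f(\lambda_{0})|$ (well defined by Step 2), and the supporting hyperplane at $\lambda_{0}$ for $\Gamma^{\sigma}$ coincides with the tangent hyperplane to $\{f=\sigma\}$ there, so $\Gamma^{\sigma}$ lies entirely on one side of $T_{\lambda_{0}}\partial\Gamma^{\sigma}$. Equivalently, the second fundamental form of $\partial\Gamma^{\sigma}$ with respect to the inward normal is nonnegative, which is the geometric convexity statement. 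The main obstacle is really the nondegeneracy argument in Step 2, but concavity of $f$ together with $\sigma<\sup_{\Gamma}f$ (included in the choice of admissible $\sigma$) handles it cleanly.
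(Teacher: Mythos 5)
Your proposal is correct and takes essentially the same route as the paper: the only substantive point is that $Df$ cannot vanish on $\{f=\sigma\}$, and both arguments deduce this from concavity \eqref{3I-30} by showing that a critical point $\lambda_0$ would be a global maximizer, so $f(\lambda_0)=\sup_{\Gamma}f>\sigma$, a contradiction, after which smoothness follows from the implicit function theorem and convexity from $\Gamma^{\sigma}$ being a superlevel set of a concave function. Your nondegeneracy step is in fact marginally more direct, invoking the supporting-hyperplane inequality at $\lambda_0$ itself, whereas the paper first uses \eqref{3I-20w} together with concavity to conclude $Df=0$ on the cone $\lambda_0+\Gamma_n$ before arriving at the same conclusion.
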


\begin{proof}
If $f$ satisfies both \eqref{3I-20} and \eqref{3I-30}, then
$\partial \Gamma^{\sigma}$ is clearly smooth and convex.
This is still true under the weaker condition~\eqref{3I-20w} in place of \eqref{3I-20}. 
For if $Df (\lambda_0) = 0$ for some $\lambda_0 \in \Gamma$ then 
by \eqref{3I-20w} and \eqref{3I-30} we see that $Df = 0$ in the cone $\lambda_0 + \Gamma_n$ and therefore $f (\lambda_0) = \sup_{\Gamma} f > \sigma$,
showing that $D f \neq 0$ on $\partial \Gamma^{\sigma}$. So $\partial \Gamma^{\sigma}$ is smooth. 
\end{proof}

For $\lambda \in \partial \Gamma^{\sigma}$ let
\[ \nu_{\lambda} = \frac{Df (\lambda)}{|Df (\lambda)|} \]
denote the unit normal vector to $\partial \Gamma^{\sigma}$ at $\lambda$.

\begin{definition}[\cite{Guan14}] 
For $\mu \in \bfR^n$
let 
 \[ S^{\sigma}_{\mu} = \{\lambda \in \partial
\Gamma^{\sigma}: \nu_{\lambda} \cdot (\mu - \lambda) \leq 0\}. \]
The  {\em tangent cone at infinity} 
to $\Gamma^{\sigma}$ is defined as
\[ \begin{aligned}
\cC^+_{\sigma}
 \,& = \{\mu \in \bfR^n:
              S^{\sigma}_{\mu} \; \mbox{is compact}\}.
    \end{aligned} \]
\end{definition}

Clearly $\cC^+_{\sigma}$ is a symmetric convex cone. As in \cite{Guan14} one can show that $\cC^+_{\sigma}$ is open.

\begin{definition}[\cite{GN}]
The {\em rank of ${\mathcal{C}}_{\sigma}^+$} is defined to be
\[ \min \{r (\nu): \mbox{$\nu$ is the unit normal vector of a supporting plane
to ${\mathcal{C}}_{\sigma}^+$}\} \]
where $r (\nu)$ denotes the number of non-zero
components of $\nu$.
\end{definition}


Our first result concerns {\em a priori} interior estimates for second order derivatives.
Let 
$u \in C^4 (M)$ be an admissible solution of the equation %
\begin{equation}
\label{ggq-I20g}
  f (\lambda( \Delta u \omega - \sqrt{-1} \partial \bpartial u +  \chi [u]))
      = \psi [u]  
\end{equation}
where $\chi [u] = \chi (z, u, \partial u, \bpartial u)$ 
and $\psi [u] = \psi (z, u, \partial u, \bpartial u)$ are $C^2$ in  
$(z, u, \partial u, \bpartial u)$.
We assume 
\begin{equation}
\label{ggq-I45}
  \lim_{t \rightarrow +\infty} f (t {\bf{1}}) - \sup_{M} \psi [u] \geq c_0 > 0
\end{equation}
where ${\bf{1}} = (1, \ldots, 1) \in \Gamma$. So $c_0$ may depend on 
$|u|_{C^1 (M)}$. 
This is clearly a necessary and fairly mild condition for the following result to hold.  

\begin{theorem}
\label{ggq-th-I10}
In addition to \eqref{3I-30}, \eqref {3I-40}, \eqref{3I-20w} and \eqref{ggq-I45}
assume that $f$ satisfies
\begin{equation}
\label{ggq-I50}
\sum f_i \lambda_i \geq - C_0  \sum f_i  \;\;
\mbox{in $\Gamma$} 
\end{equation}
for some constant $C_0 \geq 0$, and 
\begin{equation}
\label{ggq-I60}
\mbox{rank of} \;\; \mathcal{C}_{\sigma}^+ \geq 2, 
\;\; \forall \; \inf_{M} \psi [u] \leq \sigma \leq  \sup_{M} \psi [u]. 
\end{equation}
 Let $B_r$ be a geodesic ball of radius $r$ in $M$. Then 
\begin{equation}
\label{ggn-I70}
\sup_{B_{\frac{r}{2}}} |\partial \bpartial u | \leq \frac{C}{r^2} 
\end{equation}
where  $C$ depends  on $c_0$, $C_0$,
$|u|_{C^1 (\bar{B}_r)}$ and other known data.
\end{theorem}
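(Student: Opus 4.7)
The plan is to run a maximum principle argument on a test function of the form
$$
W = \eta^2 \, \lambda_1(U) \, e^{\phi(|\nabla u|^2)} \, e^{A(M_0 - u)},
$$
where $U = U[u] := \Delta u\,\omega - \sqrt{-1}\,\partial\bpartial u + \chi[u]$, $\lambda_1(U)$ is the largest eigenvalue of $U$ with respect to $\omega$, $\eta \in C^\infty_c(B_r)$ is a standard cutoff with $\eta \equiv 1$ on $B_{r/2}$ and $r|\nabla\eta|+r^2|\nabla^2\eta|\le C$, $\phi(s) = -\log(1 - s/(2K))$ with $K = 1+\sup_{B_r}|\nabla u|^2$, and $A$, $M_0$ are large constants to be chosen. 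If $W$ attains its maximum on $\partial B_r$ there is nothing to prove; otherwise let $x_0$ be an interior maximum point. By the usual perturbation trick we may assume $\lambda_1(U(x_0))$ is simple, and then work in normal coordinates at $x_0$ in which $\omega$ is Euclidean and $U_{i\bar j}(x_0)=\lambda_i\delta_{ij}$.

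Denote the linearized operator by $\mathcal{L} := \sum_i f_i\, \nabla_i\bpartial_i$, with $f_i = \partial f/\partial\lambda_i$ evaluated at $\lambda(U(x_0))$. The conditions $\nabla\log W = 0$ and $\mathcal{L}(\log W) \le 0$ at $x_0$ expand schematically into
$$
\mathcal{L}(\log\lambda_1) + \phi'\,\mathcal{L}(|\nabla u|^2) + \phi''\,\big|\nabla|\nabla u|^2\big|^2_{\mathcal{L}} - A\,\mathcal{L}(u) + 2\mathcal{L}(\log \eta) \le 0.
$$
Differentiating the equation twice and invoking the concavity \eqref{3I-30} of $f$ produces the standard lower bound
$$
\mathcal{L}(\lambda_1) \ge 2\sum_{p>1}\frac{f_p - f_1}{\lambda_1 - \lambda_p}\,|\nabla_p U_{1\bar 1}|^2 + (\text{controllable } \chi\text{-, }\psi\text{- and torsion terms}),
$$
while $\mathcal{L}(|\nabla u|^2)$ contributes the positive "gradient" term $\phi'\sum f_i(|\nabla_i\nabla u|^2 + |\nabla_i\bpartial u|^2)$, which is the standard tool for absorbing the problematic off-diagonal third-order terms coming from the partial-Laplacian structure of $U$. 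The condition \eqref{ggq-I50} and admissibility give $-\mathcal{L}(u)\le C\sum f_i$, and \eqref{ggq-I45} furnishes a uniform lower bound $\sum f_i \ge \delta_0>0$ via the behavior of $f$ along $t\mathbf{1}$.

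The crucial new ingredient is the rank $\ge 2$ condition \eqref{ggq-I60}. Following the Guan--Nie framework, this produces a dichotomy at $x_0$: either a uniform positivity $f_1 \ge \delta\sum_i f_i$ is available (allowing the concavity term $\sum_{p>1}(f_p-f_1)|\nabla_p U_{1\bar 1}|^2/(\lambda_1-\lambda_p)$ to dominate the remaining bad third-order contributions after Cauchy--Schwarz), or a coercivity estimate $\sum_i f_i(\lambda_1-\lambda_i)\ge \delta\,\lambda_1\sum_i f_i$ holds (beating all first-order terms directly through the $-A\mathcal{L}(u)$ contribution once $A$ is chosen large in terms of $c_0$, $C_0$, and the $C^1$-norm of $u$). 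In either regime the inequality at $x_0$ collapses to
$$
\eta^2\lambda_1 \le C\big(1 + r^{-2}\big),
$$
which, restricted to $B_{r/2}$ where $\eta\equiv 1$, yields \eqref{ggn-I70}.

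The main obstacle is exploiting \eqref{ggq-I60} to absorb exactly those third-order terms that are not automatically controlled by concavity under the weakened monotonicity \eqref{3I-20w} (this is precisely what strict monotonicity would otherwise hand us for free). A secondary technical issue is that on a Hermitian background the torsion of $\omega$ creates commutator contributions of order $\lambda_1\sum f_i$ and $|\nabla u|\lambda_1\sum f_i$ when interchanging covariant derivatives in $\mathcal{L}(\lambda_1)$; these are handled by enlarging $A$ and exploiting $\phi'$ through the gradient auxiliary, in the spirit of the estimates used for the $\chi$-term.
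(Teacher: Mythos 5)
Your overall test-function framework (largest eigenvalue plus perturbation, gradient auxiliary $\phi(|\nabla u|^2)$, cutoff) is a legitimate alternative to the Tosatti--Weinkove quantity \eqref{gblq-C10A} used in the paper, but the heart of the theorem --- how the hypothesis that the rank of $\cC_\sigma^+$ is at least $2$ is converted into an analytic inequality --- is where your argument has a genuine gap. First, note that the linearized operator is not $\sum_i f_i\nabla_i\bpartial_i$: for $f(\lambda(\Delta u\,\omega-\sqrt{-1}\partial\bpartial u+\chi[u]))$ the coefficient of $\nabla_i\bpartial_i$ is $\sum_k f_k-f_i$, and this partial-Laplacian structure is precisely where the rank enters. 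The paper proves (Lemmas~\ref{ggq-lemma-P20}--\ref{ggq-lemma-P40}, with \eqref{ggq-I50} used to guarantee the boundedness of $\nu_{\lambda_k}\cdot\lambda_k$ along the contradiction sequence) that on $\partial\Gamma^\sigma$, ordering $f_1\le\cdots\le f_n$, one has $\sum_{i\le n-1}f_i\ge c_1\sum f_i$ when the rank is $\ge 2$; equivalently, \emph{every} coefficient $\sum_k f_k-f_i$ of the true linearized operator is bounded below by $c_1\sum_k f_k$ (this is \eqref{ggq-C2-100}). This inequality holds unconditionally, not as one branch of a dichotomy, and it is what allows all the bad third-order, torsion and gradient contributions --- which after Cauchy--Schwarz are of size $C|A|^2\bigl(1+\sum G^{i\bi}\bigr)$ --- to be absorbed by the good term $\gamma^2\fg_{1\bar 1}\sum_i G^{i\bi}|A_i|^2\ge c_1\gamma^2\fg_{1\bar 1}|A|^2\sum f_i$, with \eqref{ggq-I45} and concavity then giving $\sqrt{\fg_{1\bar 1}}\,\sum f_i\ge \tfrac{c_0}{2}-\cdots$ so that the maximum-point inequality forces $\eta\,\fg_{1\bar 1}\le C/r^2$.

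By contrast, the dichotomy you invoke (either $f_1\ge\delta\sum f_i$, or $\sum_i f_i(\lambda_1-\lambda_i)\ge\delta\lambda_1\sum f_i$) is asserted, not derived, and it is not a consequence of \eqref{ggq-I60}: for $f=\sigma_k^{1/k}$ with $2\le k\le n-1$ the rank condition holds, yet the derivative attached to the largest eigenvalue can be arbitrarily small compared with $\sum f_i$, so the first alternative may simply fail; and the second alternative holds essentially always (by concavity and the boundedness of $f(\lambda)=\psi$ one has $\sum f_i\lambda_i\le t\sum f_i$), but it is of no use in the way you propose, since the term $-A\,\mathcal L(u)$ it is meant to activate is only of size $A\bigl(1+\sum f_i\bigr)$ and cannot absorb the problematic third-order terms, which are of order $\lambda_1\sum f_i$ and worse. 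That mechanism is the $\cC$-subsolution device of Sz\'ekelyhidi--Hou--Ma--Wu, and Theorem~\ref{ggq-th-I10} assumes no subsolution --- it is a purely interior estimate --- so there is nothing for that branch to lean on. Unless you replace your dichotomy by the unconditional trace-comparability inequality of Lemma~\ref{ggq-lemma-P30} (applied through the partial-Laplacian structure as in \eqref{ggq-C2-100}), the argument does not close; once you do, your largest-eigenvalue test function would work, but at that point you are reproducing the paper's mechanism.
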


Geometrically, condition \eqref{ggq-I50} means that at any point $\lambda_0 \in \Gamma$, the distance from the origin to the tangent plane at $\lambda_0$
of the level hypersuface $\partial \Gamma^{f (\lambda_0)}$ has a uniform bound $C_0$. It is satisfied in most applications and is weaker than assumption~\eqref{3I-45} which implies
\begin{equation}
\label{ggq-I55}
\sum f_i \lambda_i \geq 0 \;\;
\mbox{in $\Gamma$}. 
\end{equation}

By an inequality of Lin-Trudinger~\cite{LT1994}, 
the rank of $\mathcal{C}_{\sigma}^+$ is $n-k+1$ for $f = \sigma_k^{\frac{1}{k}}$
and $\sigma > 0$, where
\[ \sigma_k (\lambda) = \sum_{1 \leq i_1 < \cdots < i_k \leq n}
     \lambda_{i_1} \cdots \lambda_{i_k} \] 
is the  $k$-th elementary symmetric function defined on the Garding cone
\[ \Gamma_k = \{\lambda \in \bfR^n: \sigma_j (\lambda) > 0, \;
\mbox{for $1 \leq j \leq k$}\}. \]
So Theorem~\ref{ggq-th-I10} applies to $f = \sigma_k^{\frac{1}{k}}$ for $k \leq n-1$ 
but excludes equation~\eqref{ggq-I10} which corresponds to 
$f =  \sigma_n^{\frac{1}{n}}$.

Theorem~\ref{ggq-th-I10} heavily relies on a crucial lemma in Section~\ref{ggq-P}.
From the proof we shall see 
that there exists $\beta_0 > 1$ such that Theorem~\ref{ggq-th-I10} extends to the equation
\begin{equation}
\label{ggq-I20beta}
  f (\lambda( \Delta u \omega - \beta \sqrt{-1} \partial \bpartial u +  \chi [u]))
      = \psi [u],  
\end{equation}
for all $\beta < \beta_0$. 
It would be interesting to decide the supremum value, $\sup \beta_0$, of such 
$\beta_0$ and study the equation in the limiting case.
For equation~\eqref{ggq-I10}, $\sup {\beta}_0 = 1$.

The second purpose of the current paper is to extend 
Theorems~\ref{ggq-th-I10} and \ref{ggq-th-I20} to a more general setting.

Let  $K \leq n$ be a fixed positive integer. Set 
\[ \fI_K = \{(i_1, \ldots, i_K): 1 \leq i_1 < \cdots < i_K \leq n, \; i_j \in \bfN\} \]
and for $I = (i_1, \ldots, i_K) \in \fI_K$ define 
\begin{equation}
\label{ggq-I550}
\Lambda_I (\lambda) 
    = \sum_{i \in I} \lambda_i = \lambda_{i_1} + \cdots + \lambda_{i_K}, 
    \;\; \lambda =  (\lambda_{1},  \ldots, \lambda_{n}) \in \bfR^n. 
\end{equation}

For convenience we fix an order for the elements in $\fI_K$:
\[ I_1, \ldots, I_N, \; \mbox{where} \; N = \frac{n!}{K! (n-K)!} \]
and write
\[ \Lambda (\lambda) = (\Lambda_{I_1} (\lambda), \ldots, \Lambda_{I_N} (\lambda)) \]
or simply,
\[ \Lambda (\lambda) = (\Lambda_{1} (\lambda), \ldots, \Lambda_{N} (\lambda)). \]
We consider the equation
\begin{equation}
\label{ggq-I10Lambda} 
  f (\Lambda (\sqrt{-1} \partial \bpartial u + X [u]))
      = \psi [u],  
\end{equation}
where 
\[ \Lambda (\sqrt{-1} \partial \bpartial u + X [u]) = \Lambda (\lambda(\sqrt{-1} \partial \bpartial u + X [u])) \]
and 
\[ \lambda(\sqrt{-1} \partial \bpartial u + X [u]) = (\lambda_{1},  \ldots, \lambda_{n}) \]
denotes the eigenvalues of $\sqrt{-1} \partial \bpartial u + X [u]$ with respect to 
$\omega$, 
$X [u] = X (z, u, \partial u, \bpartial u)$
 is a real $(1,1)$ form depending on $u$, $\partial u$ and $\bpartial u$, and 
 $f$ is a symmetric concave function defined in an open symmetric convex cone $\Gamma \subset \bfR^N$ with vertex at the origin, $\Gamma_N \subset \Gamma$, 
 and satisfies
 \begin{equation}
\label{3I-20w-Lambda}
f_i \equiv f_{\Lambda_i} \equiv 
    \frac{\partial f}{\partial \Lambda_i} \geq 0 \;\;
\mbox{in $\Gamma$}, \;\; 1 \leq i \leq N.
\end{equation}
We may similarly define admissible functions by requiring $\Lambda (\sqrt{-1} \partial \bpartial u + X [u]) \in \Gamma$. 
Clearly, equations~\eqref{ggq-I20} and ~\eqref{ggq-I10Lambda} coincide 
when $K = n-1$.

 Equation~\eqref{ggq-I10Lambda} is closely related to the general notion of $\bfG$-plurisubharmonicity of Harvey-Lawson 
 introduced and studied in a long series of papers (see e.g. \cite{HL11, HL12, HL13}).
 As in \cite{GN} we denote
\[ \rho_k (\lambda) := \prod_{1 \leq i_1 < \cdots < i_k \leq n}
(\lambda_{i_1} + \cdots + \lambda_{i_k}),   \; 1 \leq k \leq n \]
defined  in the cone
\[ \mathcal{P}_k : = \{\lambda \in \bfR^n:
      \lambda_{i_1} + \cdots + \lambda_{i_k} > 0,
         \; \forall \; 1 \leq i_1 < \cdots < i_k \leq n \}. \]
A function $u$ is $k$-plurisubharmonic 
in the sense of Harvey-Lawson~\cite{HL12} if 
\[ \lambda (\sqrt{-1} \partial \bpartial u + X) \in \cP_k.     \]

For $f (\lambda) = \log \rho_k (\lambda)$ which is concave in $\mathcal{P}_k$, the rank of $\cC_{\sigma}^ +$ is $k$.  
 Clearly, 
 \[ \rho_1(\lambda) = \sigma_n (\lambda), \;\; \rho_n  (\lambda) = \sigma_1 (\lambda), \]
 and equation~\eqref{ggq-I10} 
is equivalent to 
\[ \rho_{n-1} (\lambda (\sqrt{-1} \partial \bpartial u + X)) = \psi. \]
Note that  $\rho_K (\lambda) = \sigma_N (\Lambda (\lambda))$. So the equation
\[ \rho_K (\lambda (-\partial \bpartial u + \chi)) = \psi  \]
for $1 \leq K \leq n$ is included in the examples of equation~\eqref{ggq-I10Lambda} given by
\[ f (\Lambda) = \sigma_k^{1/k} (\Lambda), \; 1 \leq k \leq N. \]

Theorem~\ref{ggq-th-I10} holds for  
equation~\eqref{ggq-I10Lambda} with the modification of replacing 
the rank condition~\eqref{ggq-I60} by
\begin{equation}
\label{ggq-I60Lambda}
\mbox{rank of} \;\; \mathcal{C}_{\sigma}^+ \geq \frac{N (n - K)}{n} + 1, 
\;\; \forall \; \sup_{\partial \Gamma} f <  \sigma < \sup_{\Gamma} f.
\end{equation}
Indeed it extends to the equation 
\begin{equation}
\label{ggq-I10LL}
  f (\Lambda (\sqrt{-1} \partial \bpartial u + X [u]) - \beta \Lambda' (\sqrt{-1} \partial \bpartial u + X [u]))
      = \psi [u],  
\end{equation}
for some $\beta > 0$, where $\Lambda' = (\Lambda_{I'_1}, \ldots, \Lambda_{I'_N})$
and $I'_j$ denotes the complement of $I_j$ in $\{1, \ldots, n\}$;
we state a more complete result for equation~\eqref{ggq-I10LL}
to include global and boundary estimates on second derivatives.  

\begin{theorem}
\label{ggq-th-I20}
Under conditions \eqref{3I-30}, \eqref{3I-20w-Lambda}, 
\eqref{ggq-I50} and \eqref{ggq-I60Lambda},
 there exists constant $\beta_0 > 1$ such that for any $\beta < \beta_0$ and 
 admissible solution 
 $u \in C^{4} (M)$ of equation~\eqref{ggq-I10LL} %
satisfying \eqref{ggq-I45}, the interior estimate \eqref{ggn-I70}
holds for any geodesic ball $B_r$ in $M$. 

Suppose in addition that $(M^n, \omega)$
is a Hermitian manifold with smooth boundary $\partial M$ 
and compact closure $\bM = M \cup \partial M$, and 
$u \in C^{4} (\bM)$. Then
\begin{equation}
\label{ggq-C2-120}
   \max_{{\bM}} |\partial \bpartial u| \leq 
      C \Big(1 + \max_{\partial M} |\partial \bpartial u|\Big)
\end{equation}
and 
\begin{equation}
\label{ggq-C2-130}
 \max_{\partial M} |\nabla^2 u| \leq C.
\end{equation}
\end{theorem}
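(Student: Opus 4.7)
\emph{Interior estimate.} For \eqref{ggn-I70}, I would adapt the maximum principle argument used for Theorem~\ref{ggq-th-I10} to the shifted operator $\Lambda - \beta\Lambda'$. Set
\[ W(z) = \eta(z)\, e^{\phi(|\nabla u|^2) + a u}\, \lambda_1(z), \]
where $\lambda_1(z)$ is the largest eigenvalue of $\sqrt{-1}\partial\bpartial u + X[u]$ with respect to $\omega$, $\eta$ is a cutoff supported in $B_r$ equal to $1$ on $B_{r/2}$, and $\phi$ and $a$ are to be chosen. At a maximum $z_0$ of $W$, diagonalize $\sqrt{-1}\partial\bpartial u + X[u]$ and apply the linearized operator $L$ of the equation to $\log W$. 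Concavity of $f$ then controls the third-order terms, the rank bound \eqref{ggq-I60Lambda} together with the crucial lemma of Section~\ref{ggq-P} produces the indispensable quantity of order $\lambda_1^2 \sum_k f_k$, condition \eqref{ggq-I50} absorbs the term $\sum_k f_k \Lambda_k$ generated by differentiating $\psi[u]$ and $X[u]$, and \eqref{ggq-I45} supplies a positive lower bound for $\sum_k f_k$. The subtraction $-\beta \Lambda'$ can turn effective weights in $L$ negative once $\beta$ is large; the threshold $\beta_0 > 1$ should be characterised as the largest $\beta$ for which every supporting normal of $\mathcal{C}^+_\sigma$ (which, by \eqref{ggq-I60Lambda}, carries at least $N(n-K)/n + 1$ nonzero components) still yields a nonnegative effective weight under the substitution $\Lambda_k \mapsto \Lambda_k - \beta\Lambda'_k$. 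Pinning down this combinatorial threshold will be the main technical point in the interior argument.

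\emph{Global estimate.} For \eqref{ggq-C2-120} I would repeat the above argument on $\bM$ with $\eta \equiv 1$. Either $W$ attains its maximum on $\partial M$, which yields the right-hand side of \eqref{ggq-C2-120}, or at an interior point, in which case the same computation produces an absolute bound depending only on $|u|_{C^1(\bM)}$ and the data. No new ideas are required here.

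\emph{Boundary estimate.} For \eqref{ggq-C2-130} I would localize at $p \in \partial M$ and choose holomorphic coordinates so that $\partial M = \{r = 0\}$ locally, with $r$ the signed distance. Split $\nabla^2 u|_{\partial M}$ into pure tangential, mixed tangential--normal, and double-normal components. The pure tangential part is read off from the Dirichlet data. For the mixed components construct a barrier
\[ \Phi = t\,(u - \ul{u}) + s\, r - N\, r^2, \]
where $\ul{u}$ is an admissible subsolution supplied by \eqref{ggq-I45} and the openness of $\Gamma$, and show $L\Phi \leq -\epsilon \sum_k f_k$ via concavity and \eqref{ggq-I50}; the maximum principle applied to $\Phi$ combined with tangential derivatives of $u - \ul{u}$ then gives the mixed bound. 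The double-normal component is controlled by the equation itself: the rank condition \eqref{ggq-I60Lambda} together with the tangent-cone-at-infinity characterisation of $\mathcal{C}^+_\sigma$ forces $f(\Lambda - \beta \Lambda')$ to be coercive in $u_{r\bar r}$ once the other components are bounded, so the equation caps $u_{r\bar r}$ from above. The mixed-derivative barrier is the main obstacle: building a genuine local subsolution compatible with the shifted operator $\Lambda - \beta\Lambda'$ requires a quantitative use of the tangent-cone structure of $\mathcal{C}^+_\sigma$, and I expect the bulk of the genuinely new work in proving \eqref{ggq-C2-130} to live there.
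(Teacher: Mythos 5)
Your interior and global arguments follow the same broad strategy as the paper (a maximum-principle computation on a largest-eigenvalue quantity weighted by a gradient-dependent exponential and a cutoff, with the rank hypothesis \eqref{ggq-I60Lambda} entering through Lemma~\ref{ggq-lemma-P30}), but the step you dismiss in one clause is exactly where the proof lives, and as stated it fails. Concavity of $f$ only lets you discard the term $-G^{i\bj,k\bl}\nabla_1\fg_{i\bj}\nabla_{\bar 1}\fg_{k\bl}$; it does nothing for the dangerous third-order contributions coming from the torsion of the Chern connection and from differentiating $X[u]$ and $\psi[u]$ in $\partial u,\bpartial u$ (terms such as $G^{i\bi}X_{i\bi,\zeta_\alpha}\nabla_\alpha\nabla_{\bar 1}\nabla_1 u$, which are of size $|A|^2$). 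The paper handles these by replacing your $\eta\,e^{\phi}\lambda_1$ with the Tosatti--Weinkove-type quantity $e^{(1+\gamma)\phi}\fg_{1\bar 1}W^{\gamma/2}$ and by the very specific choice $\phi=\log\big(\eta/(1-\gamma|\nabla u|^2)\big)$, whose Hessian produces the good term $\gamma\sum G^{i\bi}|A_i|^2$; only after the key inequality \eqref{ggq-C2-100} ($G^{i\bi}\geq c_1\sum f_{\Lambda_l}$ for \emph{every} $i$, which is what \eqref{ggq-I60Lambda} buys) and the concavity/\eqref{ggq-I45} estimate $\sqrt{\fg_{1\bar 1}}\sum f_{\Lambda_l}\geq c_0/2-\epsilon\sum f_{\Lambda_l}\Lambda_l^2-C_\epsilon\sum f_{\Lambda_l}$ can that term absorb all the $C|A|^2(1+\sum G^{i\bi})$ errors. (Note also that \eqref{ggq-I45} does not give a lower bound for $\sum f_{\Lambda_l}$ per se, and \eqref{ggq-I50} is used inside the proof of Lemma~\ref{ggq-lemma-P30}, not to absorb first-derivative terms of $\psi$.) In addition, the theorem asserts $\beta_0>1$, and you explicitly leave its determination open; the natural place it comes from is the same inequality \eqref{ggq-C2-100}, which controls the shifted weights $\sum_{I\ni i}f_{\Lambda_I}-\beta\sum_{I\not\ni i}f_{\Lambda_I}$ uniformly, rather than a separate combinatorial analysis of supporting normals. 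Without specifying the test quantity, the choice of $\phi$, and the source of $\beta_0$, the interior estimate is not established.

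For the boundary estimate \eqref{ggq-C2-130} your outline has a concrete flaw: the barrier is built from "an admissible subsolution supplied by \eqref{ggq-I45} and the openness of $\Gamma$", but \eqref{ggq-I45} is a condition on $f$ and $\psi[u]$ and produces no subsolution, and Theorem~\ref{ggq-th-I20} itself assumes neither a subsolution nor Dirichlet data; in this paper the subsolution only appears in the existence results (Theorems~\ref{ggq-th-I30} and \ref{ggq-th-E30}), and the boundary estimate is obtained by following the constructions of \cite{GQY19}, whose barriers exploit the presence of the full trace term in the operator (here the structure of $\Lambda-\beta\Lambda'$ with $\beta<\beta_0$). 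Your double-normal step likewise asserts coercivity of $f(\Lambda-\beta\Lambda')$ in $u_{r\bar r}$ from the tangent-cone characterisation without an argument. So the tangential part of the sketch is fine, but the mixed and double-normal bounds are unsupported as written.
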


One may also derive interior gradient estimates under suitable growth conditions of $X$ and $\psi$ on $u$ and $\nabla u$; see Theorem~\ref{ggq-th-G10} for the precise statement.

We now turn our attentions to existence of admissible solutions 
for equation~\eqref{ggq-I10Lambda}. 
As we shall see from the proofs the results extend to equation~\eqref{ggq-I10LL} 
for $\beta < \beta_0$ (as in Theorem~~\ref{ggq-th-I20}) without difficulty.
For simplicity we only state our results here for $X$ and $\psi$ both independent of $u$ and $\nabla u$; more general results are stated in Section~\ref{ggq-E}. 
We first consider the Dirichlet problem. 

\begin{theorem}
\label{ggq-th-I30}
Let $(M^n, \omega)$ be a Hermitian manifold with smooth boundary 
$\partial M$ and compact closure $\bM = M \cup \partial M$, 
$X$ a smooth real $(1,1)$ form on $\bM$,  $\psi \in C^{\infty} (\bM)$ and $\varphi \in C^0 (\partial M)$.  
Suppose \eqref{3I-30}, \eqref{3I-40}, \eqref{3I-20w-Lambda}, 
\eqref{ggq-I50} and 
\eqref{ggq-I60Lambda} hold, and that there exists an admissible viscosity subsolution $\ul u \in C^0 (\bM)$ satisfying 
\begin{equation}
\label{ggq-I10Ls}
 \left\{
\begin{aligned}
 f (\Lambda (\sqrt{-1} \partial \bpartial \ul u + X)) \geq \,& \psi   \; \mbox{in $\bM$}, \\
\ul u = \varphi \; \mbox{on $\partial M$}.
\end{aligned} \right.
\end{equation}
Equation~\eqref{ggq-I10Lambda} 
admits a unique admissible solution $u \in C^{\infty} (M) \cap C^0 (\bM)$ with $u =\varphi$ on $\partial M$ and,  for any subdomain $\Omega \subset \ol{\Omega} \subset M$,
\begin{equation}
\label{ggq-I150}
|u|_{C^{2, \alpha} (\Omega)} \leq C, \; 0 < \alpha \leq 1
\end{equation}
where $C$ depends on the distance from $\Omega$ to $\partial M$. 

If in addition $\varphi \in C^{\infty} (\partial M)$ then $u \in C^{\infty} (\bM)$
and satisfies the global estimate
\begin{equation}
\label{ggq-I150'}
|u|_{C^{2, \alpha} (\bM)} \leq C, \; 0 < \alpha \leq 1.
\end{equation}
\end{theorem}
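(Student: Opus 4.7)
My plan is to prove Theorem~\ref{ggq-th-I30} via the continuity method combined with the a priori estimates already established (Theorem~\ref{ggq-th-I20} for second derivatives, and Theorem~\ref{ggq-th-G10}, cited in the text, for interior gradient bounds). I would first treat the case of smooth boundary data $\varphi \in C^{\infty}(\partial M)$, establishing the global estimate \eqref{ggq-I150'}, and then recover the case of merely continuous $\varphi$ by a Perron-type stability argument, producing a solution of class $C^{\infty}(M) \cap C^{0}(\bM)$ and the interior estimate \eqref{ggq-I150}.

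The continuity method would deform from a known solution (for instance $\ul u$ suitably perturbed, after replacing the viscosity subsolution by a smooth one via sup-convolution/mollification and a small strictly admissible correction $\epsilon h$ with $\sqrt{-1}\,\partial \bpartial h$ positive definite; such a correction is available because \eqref{3I-20w-Lambda} is only degenerate ellipticity). Openness follows from the implicit function theorem applied to the linearized operator, whose ellipticity in the relevant direction is guaranteed by \eqref{3I-20w-Lambda} together with the rank condition \eqref{ggq-I60Lambda} which forces $\sum f_i > 0$ at admissible points; closedness reduces to a priori $C^{2,\alpha}$ bounds. The $C^{0}$ bound from below comes from comparing with $\ul u$ and from above via the maximum principle using \eqref{3I-40}. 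Interior gradient bounds are provided by the cited Theorem~\ref{ggq-th-G10}, while boundary gradient estimates are obtained by a standard barrier argument using $\ul u$ as a lower barrier and a local upper barrier built from the distance function to $\partial M$. For the second-order estimate, Theorem~\ref{ggq-th-I20} gives the interior bound and reduces the global estimate to the boundary estimate \eqref{ggq-C2-130}.

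Once $|u|_{C^{2}(\bM)}$ is controlled, the equation becomes uniformly elliptic (on the set where the solution lives) and concave in $\sqrt{-1}\,\partial\bpartial u$, so the Evans--Krylov theorem produces interior $C^{2,\alpha}$ bounds and, in the smooth boundary-data case, global $C^{2,\alpha}$ bounds; standard Schauder theory then bootstraps to $C^{\infty}$. Uniqueness follows from the comparison principle, which is valid under \eqref{3I-20w-Lambda} together with concavity \eqref{3I-30}. For continuous boundary data $\varphi \in C^{0}(\partial M)$, I would approximate $\varphi$ by $\varphi_k \in C^{\infty}(\partial M)$ with $\varphi_k \to \varphi$ uniformly, solve for $u_k$ at each level, and pass to the limit using the interior estimate \eqref{ggq-I150} (which is independent of boundary regularity once a global $C^{0}$ bound is available) together with uniform continuity at the boundary established by sliding barriers built from $\ul u$ and the appropriate supersolution.

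The step I expect to be the main obstacle is the boundary $C^{2}$ estimate \eqref{ggq-C2-130} for equation~\eqref{ggq-I10Lambda}, or more generally for \eqref{ggq-I10LL}, in the setting where $f$ is only degenerately elliptic and the spectrum lives in the cone $\Gamma \subset \bfR^{N}$ rather than $\bfR^{n}$. The double tangential and mixed tangential-normal bounds follow the by-now classical schemes of Caffarelli--Nirenberg--Spruck and their Hermitian adaptations, but the double-normal derivative bound in the partial-Laplacian framework requires carefully exploiting the rank condition \eqref{ggq-I60Lambda}: one must verify that the eigenvalue vector of the unknown Hessian at a boundary point cannot escape to infinity along the normal direction, which amounts to a quantitative non-degeneracy of $f$ transverse to $\partial M$ derived from the tangent cone $\mathcal{C}_{\sigma}^{+}$ having rank at least $\tfrac{N(n-K)}{n}+1$. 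This is precisely where the subsolution hypothesis \eqref{ggq-I10Ls} is consumed, as it provides the initial admissible vector used to set up the barrier inequality.
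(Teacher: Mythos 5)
Your overall route coincides with the paper's: establish the $C^0$, gradient and second order a priori estimates (quoting Theorem~\ref{ggq-th-I20} for \eqref{ggq-C2-120}--\eqref{ggq-C2-130} and Theorem~\ref{ggq-th-G10} for the gradient), apply Evans--Krylov and Schauder, run the continuity method for smooth data, get uniqueness from the comparison principle, and recover $\varphi \in C^0(\partial M)$ by approximation using the interior estimates \eqref{ggq-I150.1}--\eqref{ggq-I155.2}. Two of your specific justifications, however, are off. First, the upper $C^0$ bound (and the matching side of the boundary gradient bound) does not come from \eqref{3I-40}: that condition only keeps the equation nondegenerate and bounds nothing. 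The mechanism in the paper is that admissibility forces $\Lambda(\fg[u]) \in \Gamma \subset \{\sum \mu_i > 0\}$, hence $\Delta u + \tr X > 0$, so the maximum principle gives $u \leq h$ where $h$ solves the linear problem $\Delta h + \tr X = 0$ in $M$, $h = \varphi$ on $\partial M$; combined with $\ul u \leq u$ this pins down $u$ and $\partial u / \partial \nu$ on $\partial M$ (see \eqref{ggq-E30}--\eqref{ggq-E130}). Your distance-function barrier can be made to work, but only by invoking exactly this differential inequality, which your sketch never states.

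Second, your smoothing of the viscosity subsolution relies on adding $\epsilon h$ with $\sqrt{-1}\,\partial\bpartial h > 0$, asserted to exist "because \eqref{3I-20w-Lambda} is only degenerate ellipticity." That is a non sequitur: a strictly plurisubharmonic function need not exist on a general compact Hermitian manifold with boundary (e.g. when $\bM$ contains compact complex subvarieties of positive dimension), and no hypothesis of the theorem supplies one. It is also unnecessary: the paper assumes $\ul u \in C^\infty(\bM)$, $\varphi \in C^\infty(\partial M)$ and treats general data purely by approximation of the boundary values, using that the subsolution enters only through the lower bound $u \geq \ul u$ and \eqref{ggq-E30}. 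Relatedly, your closing claim that the subsolution hypothesis is "consumed" in the double-normal boundary estimate does not reflect this paper: \eqref{ggq-C2-130} is stated in Theorem~\ref{ggq-th-I20} for any admissible solution, with the proof deferred to \cite{GQY19}, so in the existence proof it is simply quoted rather than rebuilt from \eqref{ggq-I10Ls}.
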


More precisely,  for a domain $\Omega$ in $M$ we have 
\begin{equation}
\label{ggq-I150.1}
|\nabla u|^2 + |\nabla^2 u| \leq \frac{C}{d^2} \Big\{1 + \sup_M u - u\Big\} \;\; 
\mbox{in $\Omega$}
\end{equation}
and 
\begin{equation}
\label{ggq-I155.2}
 |\nabla^2 u|_{C^{\alpha} (\Omega)} \leq \frac{C}{d^{2+ \alpha}} 
 \Big\{1 + \sup_M u - \inf_{\Omega} u\Big\}, \; 0 < \alpha \leq 1
 \end{equation}
where $d$ is the distance from $\Omega$ to $\partial M$, and $C$ is independent of $d$.

An interesting question would be when there exist admissible subsolutions. Under suitable conditions on $f$, one should be able to follow the construction of Caffarelli-Nirenberg-Spruck~\cite{CNS3} with appropriate modifications, for instance, on manifolds with
$K$-plurisubharmonic exhausting functions.

Equation~\eqref{ggq-I10Lambda} belongs to the more general class of 
equations of the form
\begin{equation}
\label{ggq-I100}
  f (\lambda (\sqrt{-1} \partial \bpartial u + X [u]))
      = \psi [u].  
\end{equation}
Consequently, when $X$ and $\psi$ are both independent of $u$ and its gradient $\nabla u$,
results of Szekelyhidi~\cite{Szekelyhidi18} apply to equation~\eqref{ggq-I10Lambda} on compact Hermitian manifold (without boundary).
Under assumption~\eqref{ggq-I60Lambda}, we obtain the following existence result for equation~\eqref{ggq-I10Lambda} without the key $\cC$-subsolution condition in \cite{Szekelyhidi18} (which, in terms of the tangent cone at infinity, is equivalent to the condition introduced in \cite{Guan14} for a type I cone $\Gamma$
that there exists a function $\ul u \in C^2 (M)$ with 
$\lambda (\sqrt{-1} \partial \bpartial \ul u + X) \in \cC^+_{\psi}$ on $M$ \cite{Guan}; recall from \cite{CNS3} that $\Gamma$ is of type I if each positive $\lambda_i$ axis is contained in $\partial \Gamma$).

\begin{theorem}
\label{ggq-th-I40}
Let $(M^n, \omega)$ be a compact Hermitian manifold, $X$ a smooth positive $(1,1)$ form and $\psi \in C^{\infty} (M)$.
Suppose \eqref{3I-30}, \eqref{3I-40}, \eqref{3I-20w-Lambda}, 
\eqref{ggq-I50} and 
\eqref{ggq-I60Lambda} hold; without loss of generality (by adding a constant if necessary) we may assume
in place of  \eqref{3I-40} that
\begin{equation}
\label{3I-40'}
 \sup_{\partial \Gamma} f \leq 0 < \psi. 
\end{equation}
Assume in addition that
\begin{equation}
\label{ggq-I45infty}
\sup_{\Gamma} f = 
\lim_{t \rightarrow +\infty} f (t {\bf{1}}) 
 > \sup_M \psi. 
\end{equation}
Then there exists a unique constant $b$ such that the equation 
\begin{equation}
\label{ggq-I10LC}
  f (\Lambda (\sqrt{-1} \partial \bpartial u + X))
      = e^b \psi
\end{equation}
has a unique admissible solution $u \in C^{\infty} (M)$ with 
\[ \sup_M u = 0. \]
\end{theorem}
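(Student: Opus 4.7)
\emph{Plan.} I would argue by the continuity method. Set
\begin{equation*}
f\bigl(\Lambda(\sqrt{-1}\partial\bpartial u_t + X)\bigr) = e^{b_t}\psi_t, \qquad \sup_M u_t = 0, \qquad t \in [0,1],
\end{equation*}
with $\log\psi_t := (1-t)\log f(\Lambda(X)) + t\log\psi$. Since $X > 0$ forces $\lambda(X) \in \Gamma_n$ and hence $\Lambda(\lambda(X)) \in \Gamma_N \subset \Gamma$, the datum $\psi_0 = f(\Lambda(X))$ is smooth and $(u_0,b_0) = (0,0)$ solves the equation at $t=0$. Condition \eqref{3I-40'} together with $\psi > 0$ and $f(\Lambda(X)) > 0$ guarantees $\psi_t > 0 \geq \sup_{\partial\Gamma} f$, while \eqref{ggq-I45infty} plus compactness of $M$ gives $\sup_M\psi_t < \sup_\Gamma f$. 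Let $T \subset [0,1]$ be the set of parameters for which a smooth admissible pair exists; the goal is to show $T$ is nonempty, open and closed.

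\emph{Openness.} Let $\Phi(u,b) := f(\Lambda(\sqrt{-1}\partial\bpartial u + X)) - e^b\psi_t$, viewed on $\{u \in C^{2,\alpha}(M) : \sup_M u = 0\} \times \bfR$. Its linearization $(v,\eta) \mapsto F^{i\bar j}v_{i\bar j} - \eta e^{b_t}\psi_t$ has $\eta$ absorbing the one-dimensional cokernel arising from the normalization, and the rank hypothesis \eqref{ggq-I60Lambda} provides enough ellipticity (despite the weakened \eqref{3I-20w-Lambda}) to invoke Fredholm theory and the implicit function theorem.

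\emph{Closedness via a priori estimates.} A uniform bound on $b_t$ is immediate: at $\max_M u_t$, $\sqrt{-1}\partial\bpartial u_t \leq 0$, so Weyl's monotonicity gives $\Lambda_j(\sqrt{-1}\partial\bpartial u_t + X) \leq \Lambda_j(X)$ componentwise, and \eqref{3I-20w-Lambda} yields $e^{b_t}\psi_t \leq f(\Lambda(X))$ at that point; the opposite inequality follows at $\min_M u_t$, so $|b_t| \leq C$. The $C^0$ bound on $u_t$ would be obtained by an ABP/Aleksandrov-type argument exploiting $X > 0$ and the rank condition; the $C^1$ bound comes from Theorem~\ref{ggq-th-G10}; the global $C^2$ bound is obtained by covering $M$ with small geodesic balls and applying the interior estimate \eqref{ggn-I70} of Theorem~\ref{ggq-th-I20}; and Evans--Krylov applied to the concave equation yields uniform $C^{2,\alpha}$ control, closing the continuity method by standard compactness.

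\emph{Uniqueness and main obstacle.} For uniqueness of $b$, if $(u_1,b_1), (u_2,b_2)$ are two solutions, then at a maximum of $u_1 - u_2$ one has $\sqrt{-1}\partial\bpartial u_1 \leq \sqrt{-1}\partial\bpartial u_2$, so by Weyl and \eqref{3I-20w-Lambda} one obtains $e^{b_1}\psi \leq e^{b_2}\psi$; reversing roles forces $b_1 = b_2$. With $b$ fixed, uniqueness of $u$ modulo the normalization follows from the comparison principle for the concave elliptic equation satisfied by $u_1 - u_2$, using that the rank condition \eqref{ggq-I60Lambda} keeps $F^{i\bar j}$ from being too degenerate on a connected compact manifold. \emph{The principal obstacle} is the $C^0$ estimate: classical ABP relies on strict positivity of the linearized coefficients, whereas \eqref{3I-20w-Lambda} only gives $f_i \geq 0$; the degeneracy must be compensated through \eqref{ggq-I60Lambda} together with the positivity of $X$—the same structural pair that replaces the $\mathcal{C}$-subsolution hypothesis of Sz\'ekelyhidi~\cite{Szekelyhidi18}.
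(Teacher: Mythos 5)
Your continuity path is genuinely different from the paper's. You keep the constant as an unknown $b_t$ along a family of right-hand sides, normalize by $\sup_M u_t=0$, and handle openness by letting $\eta$ absorb the cokernel of $v\mapsto G^{i\bar j}\partial_i\bpartial_j v$. The paper (proof of Theorem~\ref{ggq-th-I40'}) instead perturbs the equation itself, solving $f (\Lambda (\sqrt{-1} \partial \bpartial u + X + tA\omega)) = e^{tu}\,\psi^t$ for $t\in(0,1]$: the factor $e^{tu}$ makes the linearized operator strictly monotone in $u$, hence invertible, so no Fredholm/cokernel analysis is needed; the term $tA\omega$ provides the starting point of the path; the constant $b$ never appears along the path and is produced only in the limit, $b=\lim t_k\sup_M u^{t_k}$, with $u$ obtained as a $C^{2,\alpha}$-limit of $u^{t_k}-\sup_M u^{t_k}$ using the uniform bound \eqref{ggq-E100}. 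Your openness step is exactly what this device avoids: on a Hermitian manifold the operator $G^{i\bar j}\partial_i\bpartial_j$ has no divergence structure, so the claim that $\eta$ spans the cokernel requires knowing that the kernel of the adjoint is one-dimensional and spanned by a function of one sign, and non-degenerate ellipticity must be extracted from \eqref{3I-20w-Lambda} via \eqref{ggq-C2-100} together with a positive lower bound for $\sum f_{\Lambda_l}$; moreover $\sup_M u=0$ is not a differentiable constraint. None of this is carried out. A further defect of your path: at $t=0$ you need $\psi_0=f(\Lambda(X))>0$ to define $\log\psi_t$ and to keep $\psi_t>\sup_{\partial\Gamma}f$, but the theorem only assumes $X>0$, not $f(\Lambda(X))>0$; the paper's $tA\omega$-term exists precisely to manufacture a legitimate starting point.

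The genuine gap is the $C^0$ estimate, which you yourself flag as the principal obstacle and delegate to an unspecified ABP/Aleksandrov argument. No such argument is supplied, and it is doubtful one works here: under \eqref{3I-20w-Lambda} the linearization is a priori only degenerate elliptic, the unknown enters through sums of eigenvalues rather than the full complex Hessian, and on Hermitian manifolds ABP-style $C^0$ bounds are exactly what the $\mathcal{C}$-subsolution hypothesis of \cite{Szekelyhidi18} is designed to drive --- the hypothesis this theorem dispenses with. The paper's mechanism is different and elementary: since $X$ and $\psi$ (hence your $\psi_t$) are independent of $u$ and $\nabla u$, the gradient estimate of Theorem~\ref{ggq-th-G10} holds globally in the sharp form $|\nabla u|^2\le C\,(1+\sup_M u-u)$ (take $\eta\equiv1$), and integrating along geodesics gives the oscillation bound $\sup_M u-\inf_M u\le C d^2$, i.e.\ \eqref{ggq-G215}; this is what feeds the $C^1$, then $C^2$ (Theorem~\ref{ggq-th-I20}), Evans--Krylov and Schauder steps. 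Your scheme could be repaired by replacing the ABP step with \eqref{ggq-G215}, but as written the $C^0$ bound is missing. The remaining ingredients you give --- the two-sided bound on $b_t$ from evaluating at $\max_M u_t$ and $\min_M u_t$, and the uniqueness of $b$ and of the normalized solution via the maximum principle --- are correct and consistent with the paper.
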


Without the assumption $X > 0$ equation~\eqref{ggq-I10LC'} may become degenerate. Note that when $X > 0$ constants are admissible functions. On the other hand, if $X \leq 0$ 
there are no admissible functions on compact manifolds (without boundary).

Comparing to equation~\eqref{ggq-I10Lambda}, the general equation~\eqref{ggq-I100} is much more difficult, and most of the results in this paper no longer hold without additional assumptions even in the case $X$ and $\psi$ are independent of $u$ and $\nabla u$. We also note that, among others, equation~\eqref{ggq-I100} will only be degenerate elliptic if condition \eqref{3I-20} is replaced by \eqref{3I-20w}.
For $X [u] = X (\partial u, \bpartial u)$ and $\psi [u] = \psi (\partial u, \bpartial u)$, 
Guan-Nie \cite{GN} derived the second order estimates for equation~\eqref{ggq-I100}  on closed Hermitian manifolds under rather strong conditions; we refer the reader to 
\cite{GN} for more references. 

 For $f = (\sigma_n/\sigma_k)^{\frac{1}{n-k}}$,
the Dirichlet problem for equation~\eqref{ggq-I20}  was recently studied by
Feng-Ge-Zheng~\cite{FGZ}.
In~\cite{GQY19}, Guan-Qiu-Yuan treated equations of the form
\begin{equation}
\label{ggq-GQY}
  F (\gamma \Delta u \omega - \beta \sqrt{-1} \partial \bpartial u +  \chi [u])
      = \psi  [u] 
\end{equation}
for $\gamma > 0$, $\beta < \gamma$. One should be able to extend most of results described in the current paper to the limiting case $\beta = \gamma$ for equation~\eqref{ggq-GQY}.
It also seems interesting to study equation~\eqref{ggq-I10Lambda} with 
$\Lambda_I$ defined in \eqref{ggq-I550} replaced by other symmetric functions of $K$ variables.

The rest of this paper is organized as follows. In Section~\ref{ggq-P} 
we prove some key properties of the tangent cone at infinity related to its rank. 
The second derivative and gradient estimates are derived in Sections~\ref{ggq-C2}
and \ref{ggq-G}, respectively. We also establish the Harnack inequality using the gradient estimates for positive solutions when $X$ and $\psi$ are independent of $u$ and $\nabla u$. 
Section~\ref{ggq-E} concerns the existence of admissible solutions, where we prove  Theorems~\ref{ggq-th-I30}, \ref{ggq-th-I40} and their extensions. 

The second author wishes to thank Duong Phong for stimulating communications, especially for his comments on results described in \cite{GQY19}, which motivated us to carry out the current work.

\bigskip

\section{Rank of tangent cone at infinity}
\label{ggq-P}
\setcounter{equation}{0}

\medskip

In this section $f$ is assumed to satisfy \eqref{3I-30}, \eqref{3I-20w} 
and \eqref{ggq-I50}. 
We shall prove the following key lemmas. 

\begin{lemma}
\label{ggq-lemma-P20}
Let $P = \{\mu \in \bfR^n: \nu \cdot \mu = c\}$ be a hyperplane,  where $\nu$ is a unit vector. Suppose that 
there exists a sequence 
$\{\lambda_k\}$ in $\partial \Gamma^{\sigma}$ with 
\begin{equation}
\label{ggq-P90'}
\lim_{k \rightarrow + \infty} \nu_{\lambda_k} = \nu, \;\;
\lim_{k \rightarrow + \infty} \nu_{\lambda_k} \cdot \lambda_k = c,  \;\;
\lim_{k \rightarrow + \infty} |\lambda_k| = + \infty.  
\end{equation}
Then $P$ is a supporting hyperplane to $\mathcal{C}_{\sigma}^+$ at a non-vertex point.
\end{lemma}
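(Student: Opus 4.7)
The plan is to prove the lemma in two stages. In stage (i), I show that $\overline{\cC^+_\sigma}$ lies in the closed half-space $H^+ := \{\mu \in \bfR^n : \nu \cdot \mu \geq c\}$, so that $P$ bounds $\cC^+_\sigma$ from one side. In stage (ii), I exhibit a non-vertex point in $P \cap \overline{\cC^+_\sigma}$ to promote $P$ to an honest supporting hyperplane.

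Stage (i) is routine. Given any $\mu \in \cC^+_\sigma$, the compactness of $S^\sigma_\mu$ together with $|\lambda_k| \to \infty$ forces $\lambda_k \notin S^\sigma_\mu$ for all $k$ sufficiently large, so $\nu_{\lambda_k} \cdot (\mu - \lambda_k) > 0$, i.e.\ $\nu_{\lambda_k} \cdot \mu > \nu_{\lambda_k} \cdot \lambda_k$. Passing to the limit via \eqref{ggq-P90'} yields $\nu \cdot \mu \geq c$, and the inequality extends to $\overline{\cC^+_\sigma}$ by continuity.

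For stage (ii), my natural candidate contact point is $\mu^* := c\nu$, the limit of the orthogonal projections $\pi_k := q_k \nu_{\lambda_k}$ of the origin onto the tangent hyperplanes $P_k$ of $\partial \Gamma^\sigma$ at $\lambda_k$. Since $\nu \cdot \mu^* = c$, we have $\mu^* \in P$. To place $\mu^*$ in $\overline{\cC^+_\sigma}$, I would construct an approximating sequence of the form $\pi_k + \delta_k \mathbf{1}$ with $\delta_k \downarrow 0$ and check that each such point lies in $\cC^+_\sigma$. The principal structural tool is the translation invariance $\cC^+_\sigma + t\mathbf{1} \subset \cC^+_\sigma$ for $t > 0$: since \eqref{3I-20w} gives $\nu_\lambda \cdot \mathbf{1} \geq 1$, one has $\nu_\lambda \cdot (\mu + t\mathbf{1} - \lambda) \geq \nu_\lambda \cdot (\mu - \lambda) + t$, whence $S^\sigma_{\mu + t\mathbf{1}} \subset S^\sigma_\mu$.

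The main obstacle is rigorously certifying admissibility of the approximating points---their $S^\sigma$-sets must be compact. This is where \eqref{ggq-I50} is indispensable: it produces the uniform bound $\nu_\lambda \cdot \lambda \geq -C_0\sqrt{n}$ on $\partial \Gamma^\sigma$, preventing the tangent hyperplanes from escaping to infinity and thereby keeping the perturbed $S^\sigma$-sets compact. The same bound yields $c \geq -C_0\sqrt{n}$, so $\mu^* = c\nu \neq 0$ is automatically non-vertex whenever $c \neq 0$. In the borderline case $c = 0$, where $\mu^*$ coincides with the vertex, I would switch to a recession-direction argument: after passing to a subsequence, $\omega := \lim_k \lambda_k / |\lambda_k|$ exists, and dividing $q_k = \nu_{\lambda_k} \cdot \lambda_k$ by $|\lambda_k|$ gives $\nu \cdot \omega = 0$, so $\omega \in P$. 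Being the limit of directions of a divergent sequence $\lambda_k \in \overline{\Gamma^\sigma} \subset \overline{\cC^+_\sigma}$ inside the closed convex set $\overline{\cC^+_\sigma}$, $\omega$ belongs to its recession cone, and this furnishes the non-vertex points of $P \cap \overline{\cC^+_\sigma}$ needed to complete the argument.
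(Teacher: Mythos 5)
Your stage (i) is fine and coincides with the paper's first step. The genuine gap is in stage (ii): the lemma's real content is that $P$ actually \emph{touches} $\ol{\cC^+_\sigma}$, at a point other than the vertex, and neither of your two routes delivers this. Your primary route is not carried out: you never prove $\mu^*=c\nu\in\ol{\cC^+_\sigma}$ --- you only say you ``would'' check that the points $\pi_k+\delta_k{\bf 1}$ lie in $\cC^+_\sigma$, and you yourself flag the compactness of their $S^\sigma$-sets as an unresolved obstacle; the appeal to \eqref{ggq-I50} is not an argument (and note the paper's proof of this lemma does not use \eqref{ggq-I50} at all; that condition enters only in Lemma~\ref{ggq-lemma-P30}, to guarantee that $\nu_{\lambda_k}\cdot\lambda_k$ stays bounded so that a sequence satisfying \eqref{ggq-P90'} can be extracted). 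Moreover, your claim that $c\nu$ is ``automatically non-vertex whenever $c\neq 0$'' is wrong: the vertex of $\cC^+_\sigma$ is $\va=(a,\ldots,a)$ with $a=c/\sum\nu^i$, not the origin, so for instance when $\nu={\bf 1}/\sqrt{n}$ one has $c\nu=\va$ for every $c$; being different from the origin certifies nothing.

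Your fallback recession argument also stops short of the point. Knowing that $\omega=\lim\lambda_k/|\lambda_k|$ is a recession direction of $\ol{\cC^+_\sigma}$ with $\nu\cdot\omega=0$ produces points of $P\cap\ol{\cC^+_\sigma}$ only after you already have \emph{one} point of $\ol{\cC^+_\sigma}$ on $P$ to translate along $\omega$; by itself it does not exclude the possibility that $\ol{\cC^+_\sigma}$ lies strictly inside $\{\nu\cdot\mu>c\}$ (i.e.\ $\nu\cdot\va>c$), in which case $P$ is not supporting. Even in your special case $c=0$ you would still need the vertex to sit at the origin, i.e.\ $a=0$, which is the same missing fact. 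This is exactly the step the paper supplies: it first shows $\va\in P$ (if $a>c(\sum\nu^i)^{-1}$, the rays $R_k$ from $\va$ through $\lambda_k$, which lie in $\ol{\cC^+_\sigma}\subset\ol{P^+}$, would eventually leave $\ol{P^+}$), and then gets the non-vertex contact point as the limit $\mu$ of the unit vectors $\mu_k$ from $\va$ toward $\lambda_k$, squeezing $0\geq\nu_{\lambda_k}\cdot(\mu_k-\lambda_k)\geq\nu_{\lambda_k}\cdot(\va-\lambda_k)$ to conclude $\nu\cdot\mu=c$. In effect the paper's contact point is ``vertex plus your recession direction''; without first locating the vertex on $P$ (or otherwise exhibiting some point of $P\cap\ol{\cC^+_\sigma}$), your argument does not close.
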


\begin{proof} 
Assume $\mu_0 \in \bfR^n$ with $\nu \cdot \mu_0 \leq c - \epsilon$ 
for some $\epsilon > 0$. Then
\[ \lim_{k \rightarrow + \infty} \nu_{\lambda_k} \cdot (\mu_0 - \lambda_k) 
= \nu \cdot \mu_0 - c \leq - \epsilon. \]
This means that $S_{\mu_0}^{\sigma}$ is noncompact and therefore
 $\mu_0 \notin \mathcal{C}_{\sigma}^+$. As $\epsilon > 0$ is arbitrary, 
 we see that $\mathcal{C}_{\sigma}^+$ is contained in the half space
 $P^+ = \{\mu \in \bfR^n: \nu \cdot \mu > c\}$. 

By symmetry the vertex of $\mathcal{C}_{\sigma}^+$ is at a point
$\va = (a, \ldots, a)$ for some real number $a$. 
It is easy to see that $\va \in P$, i.e.
\[ a = \frac{c}{\sum \nu^i} \]
 where 
 $\nu = (\nu^1, \ldots, \nu^n)$. Indeed, clearly $a \geq c (\sum \nu^i)^{-1}$
 for otherwise $\va \notin \ol{P^+}$. Let $R_k$ be the ray from $\va$ passing 
 $\lambda_k$ so $R_k$ lies in $\mathcal{C}_{\sigma}^+ \subset P^+$. On the other hand, $R_k$ would not completely lie in $\ol{P^+}$ for $k$ large if $a > c (\sum \nu^i)^{-1}$, a contradiction. 

Let $\mu_k$ be the point of intersection of $R_k$ and the unit sphere centered at 
$\va$. 
Passing to a subsequence we may assume 
$\mu_k \rightarrow \mu \in \ol{\mathcal{C}_{\sigma}^+}$ as 
$k \rightarrow + \infty$. Clearly,
\[ 0 \geq \nu_k \cdot (\mu_k - \lambda_k) \geq \nu_k \cdot (\va - \lambda_k). \]
Taking limit as $k \rightarrow + \infty$ we obtain $\nu \cdot \mu = c$. 
This show that $\mu \in P \cap \partial \mathcal{C}_{\sigma}^+$.
\end{proof}

\begin{lemma}
\label{ggq-lemma-P30} 
Let $r \geq 1$ be the rank of $\mathcal{C}_{\sigma}^+$. There exists $c_1 > 0$ 
such that at any point $\lambda \in \partial \Gamma^{\sigma}$ where without loss of generality we assume $f_1 \leq \cdots \leq f_n$, 
\[ \sum_{i \leq n-r+1} f_i \geq c_1 \sum f_i. \]
\end{lemma}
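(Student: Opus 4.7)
I would argue by contradiction and exhibit a supporting hyperplane to $\mathcal{C}^+_\sigma$ whose unit normal has strictly fewer than $r$ nonzero components, contradicting the definition of rank. Suppose no such $c_1 > 0$ exists. Then, after permuting coordinates (using that $f$ and $\Gamma$ are symmetric) and passing to a subsequence so that the ordering $f_1 \leq \cdots \leq f_n$ is stable, one obtains a sequence $\{\lambda_k\} \subset \partial \Gamma^\sigma$ with
\[ \frac{\sum_{i \leq n-r+1} f_i (\lambda_k)}{\sum_i f_i (\lambda_k)} \longrightarrow 0. \]
Since the $f_i$ are nonnegative and therefore $|Df|$ is comparable to $\sum f_i$, this forces the first $n-r+1$ components of $\nu_{\lambda_k} = Df(\lambda_k)/|Df(\lambda_k)|$ to tend to zero. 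Passing to a further subsequence, $\nu_{\lambda_k} \to \nu$ for some unit vector $\nu$ with at most $r-1$ nonzero components. Next, \eqref{ggq-I50} gives $\nu_{\lambda_k}\cdot \lambda_k \geq -C_0$, while concavity of $f$ (fixing any $\mu_0 \in \partial \Gamma^\sigma$ and using $Df(\lambda_k) \cdot (\mu_0 - \lambda_k) \geq 0$) gives $\nu_{\lambda_k} \cdot \lambda_k \leq |\mu_0|$. After one more subsequence, $\nu_{\lambda_k} \cdot \lambda_k \to c$ for some real $c$.

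If $|\lambda_k| \to \infty$, Lemma~\ref{ggq-lemma-P20} applies directly: the hyperplane $\{\mu : \nu \cdot \mu = c\}$ supports $\mathcal{C}^+_\sigma$ at a non-vertex point, contradicting the rank being $r$. If instead $\{\lambda_k\}$ is bounded, pass to a limit $\lambda_\infty$; since $\sigma > \sup_{\partial \Gamma} f$, continuity forces $\lambda_\infty \in \partial \Gamma^\sigma$ and $\nu = \nu_{\lambda_\infty}$, so $f_i(\lambda_\infty) = 0$ for $i = 1, \ldots, n-r+1$. Fix such an $i_0$ and consider the ray $\mu_t = \lambda_\infty + t e_{i_0}$ for $t \geq 0$. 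Along this ray, $f_{i_0} \geq 0$ makes $t \mapsto f(\mu_t)$ nondecreasing, while concavity with initial slope $f_{i_0}(\lambda_\infty) = 0$ makes it nonincreasing; hence $f(\mu_t) \equiv \sigma$. The ray cannot escape $\Gamma$, for otherwise continuity at the exit point would produce $f = \sigma$ on $\partial \Gamma$, contradicting $\sup_{\partial \Gamma} f < \sigma$. Since $f_{i_0 i_0} \equiv 0$ on the ray and $D^2 f$ is negative semidefinite, $e_{i_0}$ lies in the kernel of $D^2 f$ there, i.e., $f_{i_0 j} \equiv 0$ for every $j$, so $Df$ is constant along the ray. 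Setting $\mu_k = \lambda_\infty + k e_{i_0}$ then gives $|\mu_k| \to \infty$, $\nu_{\mu_k} \equiv \nu$, and $\nu \cdot \mu_k \equiv \nu \cdot \lambda_\infty$, so Lemma~\ref{ggq-lemma-P20} once more produces a supporting hyperplane with unit normal $\nu$, contradicting the rank.

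The main obstacle is the bounded case: one must manufacture an unbounded sequence in $\partial \Gamma^\sigma$ with the same limiting normal $\nu$. The Hessian-kernel observation (that $f$ being constant along a segment forces $Df$, not merely $f$, to be constant there) is the nontrivial ingredient that makes this step work and reduces both cases to a single application of Lemma~\ref{ggq-lemma-P20}.
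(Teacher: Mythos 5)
Your argument is correct, and its core is the same as the paper's: argue by contradiction, note that \eqref{ggq-I50} bounds $\nu_{\lambda_k}\cdot\lambda_k$ from below while concavity tested against a fixed point of $\partial\Gamma^{\sigma}$ (the paper uses $t{\bf 1}$) bounds it from above, and then feed the sequence into Lemma~\ref{ggq-lemma-P20} to produce a supporting hyperplane of $\mathcal{C}_{\sigma}^+$ whose unit normal has at most $r-1$ nonzero components. Where you go beyond the paper is the dichotomy on $|\lambda_k|$: the paper simply asserts that after passing to a subsequence one may take $|\lambda_k|\to+\infty$, whereas you treat the bounded case explicitly, passing to a limit $\lambda_\infty\in\partial\Gamma^{\sigma}$ with $f_{i_0}(\lambda_\infty)=0$ and then riding the ray $\lambda_\infty+t e_{i_0}$ (which stays in $\Gamma$ since $e_{i_0}\in\ol{\Gamma}_n\subset\ol\Gamma$ and $\Gamma$ is an open convex cone, so the exit-point discussion is not even needed), showing $f\equiv\sigma$ and $Df$ constant along it, so that Lemma~\ref{ggq-lemma-P20} applies to $\mu_k=\lambda_\infty+k e_{i_0}$ as well. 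This makes your write-up more complete than the published one. Two small remarks: the Hessian-kernel step tacitly uses $f\in C^2$ (harmless here, but one can avoid it: for a differentiable concave $f$ constant on the ray, the supporting inequality $f(q+w)\le f(q)+Df(p)\cdot(q+w-p)=\sigma+Df(p)\cdot w$ compared with the first-order expansion at $q$ already forces $Df(q)=Df(p)$ for $p,q$ on the ray); and in the bounded case one should invoke $\sup_{\partial\Gamma}f<\sigma$ in the usual lim-sup sense to place $\lambda_\infty$ in $\Gamma$, exactly as you do.
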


\begin{proof}
We prove by contradiction. Suppose 
that for each $k = 1, 2, \ldots$ there exists 
$\lambda_k \in \partial \Gamma^{\sigma}$ with 
 $f_1 (\lambda_k) \leq \cdots \leq f_n (\lambda_k)$ and 
\begin{equation}
\label{ggq-P100}
\sum_{i \leq n-r+1} f_i (\lambda_k) \leq \frac{1}{k} \sum f_i (\lambda_k). 
\end{equation}
Note that there is $t > 0$ such that $t {\bf{1}} \in \partial \Gamma^{\sigma}$, that is,
$f (t {\bf{1}}) = \sigma$. By the concavity of $f$,
\[ \sum f_i (t - \lambda_i) \geq f (t {\bf{1}}) - f (\lambda) = 0. \]
Therefore 
\[ \sum f_i \lambda_i  \leq t \sum f_i . \]
By \eqref{ggq-I50}, 
we can pass to a subsequence and assume that
\[ \lim_{k \rightarrow + \infty} \nu_{\lambda_k} = \nu, \;\;
\lim_{k \rightarrow + \infty} \nu_{\lambda_k} \cdot \lambda_k = c,  \;\;
\lim_{k \rightarrow + \infty} |\lambda_k| = + \infty.  \]
According to Lemma~\ref{ggq-lemma-P20}, 
the hyperplane 
$P = \{\mu \in \bfR^n: \nu \cdot \mu = c\}$ is a supporting hyperplane to 
$\mathcal{C}_{\sigma}^+$. However, $\nu$ has at most $r-1$ nonzero components,
which is a contradiction. 
\end{proof}

\begin{lemma}
\label{ggq-lemma-P40}
Let  
$P = \{\mu \in \bfR^n: \nu \cdot \mu = c\}$,  where $\nu$ is a unit vector, be a 
tangent hyperplane to 
$\partial \mathcal{C}_{\sigma}^+$.  Then
there exists a sequence 
$\{\lambda_k\}$ in $\partial \Gamma^{\sigma}$ with 
\begin{equation}
\label{ggq-P90}
\lim_{k \rightarrow + \infty} \nu_{\lambda_k} = \nu, \;\;
\lim_{k \rightarrow + \infty} \nu_{\lambda_k} \cdot \lambda_k = c,  \;\;
\lim_{k \rightarrow + \infty} |\lambda_k| = + \infty.  
\end{equation}
\end{lemma}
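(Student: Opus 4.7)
The plan is to prove Lemma~\ref{ggq-lemma-P40} as a partial converse to Lemma~\ref{ggq-lemma-P20} by approaching $P$ from the outside of $\mathcal{C}_\sigma^+$ and extracting an escaping sequence from the non-compactness of $S_\mu^\sigma$ for points $\mu$ just past $P$. Recall from the proof of Lemma~\ref{ggq-lemma-P20} that every supporting hyperplane of $\mathcal{C}_\sigma^+$ passes through the vertex $\va$; in particular $c = \nu \cdot \va$.

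First I would fix a non-vertex contact point $\mu_0 \in P \cap \partial \mathcal{C}_\sigma^+$ at which $P$ is the (unique) tangent hyperplane, so $\nu \cdot \mu_0 = c$. For each $k \geq 1$, set $\mu_k := \mu_0 - \frac{1}{k}\nu$; then $\nu \cdot \mu_k = c - 1/k < c$ places $\mu_k$ strictly outside $\overline{\mathcal{C}_\sigma^+}$. Consequently $S_{\mu_k}^{\sigma}$ is non-compact, and I can pick $\lambda_k \in S_{\mu_k}^{\sigma}$ with $|\lambda_k| \geq k$; by construction
\[
 \nu_{\lambda_k} \cdot \lambda_k \;\geq\; \nu_{\lambda_k} \cdot \mu_k .
\]

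Next I would extract convergent subsequences. Applying concavity of $f$ between $\lambda_k$ and a radial point $t \mathbf{1} \in \partial \Gamma^\sigma$ (where $f(t\mathbf{1})=\sigma$) gives $\sum f_i \lambda_i \leq t \sum f_i$; combined with \eqref{ggq-I50} this yields $|\sum f_i \lambda_i| \leq \max(t, C_0) \sum f_i$, so $|\nu_\lambda \cdot \lambda|$ is uniformly bounded on $\partial \Gamma^\sigma$ (using $|Df| \geq n^{-1/2} \sum f_i$). Compactness then produces a subsequence with $\nu_{\lambda_k} \to \tilde \nu$ and $\nu_{\lambda_k} \cdot \lambda_k \to \tilde c$; by Lemma~\ref{ggq-lemma-P20}, $\{\mu : \tilde \nu \cdot \mu = \tilde c\}$ is a supporting hyperplane of $\mathcal{C}_\sigma^+$ at a non-vertex point, and $\tilde c = \tilde \nu \cdot \va$.

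Passing $k \to \infty$ in the displayed inequality yields $\tilde \nu \cdot \mu_0 \leq \tilde c = \tilde \nu \cdot \va$, while $\mu_0 \in \overline{\mathcal{C}_\sigma^+}$ combined with the support property at $\va$ forces $\tilde \nu \cdot (\mu_0 - \va) \geq 0$. Therefore $\tilde \nu \cdot (\mu_0 - \va) = 0$, so $\tilde \nu$ supports $\mathcal{C}_\sigma^+$ at $\mu_0$ as well. The main obstacle is the final identification $\tilde \nu = \nu$: when $\mu_0$ is a smooth boundary point the supporting normal is unique and this is immediate, in which case $\{\lambda_k\}$ automatically satisfies \eqref{ggq-P90}. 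For a supporting $P$ that touches only at non-smooth points, I would instead first carry out the argument above at smooth boundary points (dense in $\partial \mathcal{C}_\sigma^+$) and close by a diagonal extraction, using the closedness of the set of pairs $(\nu,c)$ arising in \eqref{ggq-P90}.
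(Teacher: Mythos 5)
Your main argument is correct and proves the lemma as the paper intends it, but it runs in the opposite direction from the paper's proof, and the comparison is worth recording. The paper perturbs the contact point \emph{into} the cone, setting $\mu_k=\mu+\nu/k\in\mathcal{C}_{\sigma}^+$, selects $\lambda_k$ in the compact contact set $S^{\sigma,0}_{\mu_k}=\{\lambda\in\partial\Gamma^{\sigma}:\nu_\lambda\cdot(\mu_k-\lambda)=0\}$ by minimizing $\nu\cdot(\lambda-\mu)$, and needs a separate preliminary case ($\partial\Gamma^{\sigma}\cap\partial\mathcal{C}_{\sigma}^+\cap P\neq\emptyset$, handled by a ray) to guarantee $|\lambda_k|\to+\infty$; the convergence $\nu_{\lambda_k}\cdot\lambda_k\to c$ then comes from the identity $\nu_{\lambda_k}\cdot(\lambda_k-\mu)=\nu_{\lambda_k}\cdot\nu/k$. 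You instead perturb \emph{out} of the cone, $\mu_k=\mu_0-\nu/k$, and exploit the very definition of $\mathcal{C}_{\sigma}^+$: non-compactness (hence unboundedness) of $S^{\sigma}_{\mu_k}$ hands you $\lambda_k$ with $|\lambda_k|\geq k$ at once, so no case distinction is needed, and you replace the paper's identity by the uniform bound on $\nu_\lambda\cdot\lambda$ coming from concavity and \eqref{ggq-I50} (the same estimate the paper uses inside the proof of Lemma~\ref{ggq-lemma-P30}) to extract the limits $\tilde\nu$, $\tilde c$. Both proofs then finish identically: Lemma~\ref{ggq-lemma-P20} shows the limit plane supports $\mathcal{C}_{\sigma}^+$, your two-sided inequality pins the contact at $\mu_0$, and uniqueness of the tangent plane at the contact point forces $\tilde\nu=\nu$, $\tilde c=c$. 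Your route is slightly cleaner in that it avoids the paper's special case, at the modest cost of invoking \eqref{ggq-I50}, which is anyway a standing hypothesis of this section.

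One caution about your final paragraph: the extension you sketch for a supporting plane that touches $\partial\mathcal{C}_{\sigma}^+$ only at non-smooth points is both unnecessary and, as stated, not valid. It is not needed because ``tangent hyperplane'' here means exactly that $P$ is the unique supporting plane at its contact point (the paper's own proof concludes ``by uniqueness of the tangent plane''), which is your main case. And it would not work in general because tangent planes at nearby smooth boundary points need not converge to a prescribed supporting plane at a singular point: for a polyhedral cone, the normals at smooth points take only finitely many values, while a vertex or edge admits a continuum of supporting normals, so density of smooth points plus a diagonal extraction cannot produce an arbitrary supporting plane. Drop that paragraph; your argument up to that point already proves the lemma.
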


\begin{proof}
We may assume 
$\partial \Gamma^{\sigma} \cap \partial \mathcal{C}_{\sigma}^+ \cap P = \emptyset$
for otherwise it contains a ray (see \cite{Guan14}) and 
\eqref{ggq-P90} holds for some 
$\lambda_k \in  \partial \Gamma^{\sigma} \cap \partial \mathcal{C}_{\sigma}^+ \cap P$.

Suppose that $P$ is the tangent hyperplane to  $\partial \mathcal{C}_{\sigma}^+$
at $\mu \in P \cap \partial \mathcal{C}_{\sigma}^+$.
We have $\mu \notin \partial \Gamma^{\sigma}$ and 
$\mu_k = \mu + \frac{\nu}{k} \in \mathcal{C}_{\sigma}^+$ for all $k \geq 1$  
as $\nu \in \ol{\Gamma_n}$ and $\mu + \Gamma_n \in \mathcal{C}_{\sigma}^+$.
Let
\[ S_{\mu_k}^{\sigma, 0} 
  = \{\lambda \in \partial \Gamma^{\sigma}: \nu_{\lambda} \cdot (\mu_k - \lambda) = 0\} \]
and choose $\lambda_k \in S_{\mu_k}^{\sigma, 0}$ satisfying
\[ \nu \cdot (\lambda_k - \mu) = \min_{\lambda \in S_{\mu_k}^{\sigma, 0}}
   \nu \cdot (\lambda - \mu) \geq 0; \]
such $\lambda_k$ exists as $S_{\mu_k}^{\sigma, 0}$
is compact. 
As 
 $\partial \Gamma^{\sigma} \cap \partial \mathcal{C}_{\sigma}^+ \cap P = \emptyset$,
 we have 
\[  \lim_{k \rightarrow + \infty} |\lambda_k| = + \infty. \]
 So the last limit in \eqref{ggq-P90} holds. 
Next, 
\[ \nu_{\lambda_k} \cdot (\lambda_k - \mu) 
   =  \nu_{\lambda_k} \cdot (\lambda_k - \mu_k) 
+  \nu_{\lambda_k} \cdot (\mu_k - \mu) 
=  \frac{\nu_k \cdot \nu}{k}. \]
 Therefore 
\begin{equation}
\label{ggq-P98} 
\lim_{k \rightarrow + \infty} \nu_{\lambda_k}  \cdot (\lambda_k - \mu)  = 0. 
\end{equation}
 Passing to a subsequence we may assume 
 \[  \lim_{k \rightarrow + \infty} \nu_{\lambda_k} = \nu'. \]
By \eqref{ggq-P98} we have
\[  \lim_{k \rightarrow + \infty} \nu_{\lambda_k} \cdot \lambda_k 
   = \nu' \cdot \mu \equiv c'. \]
By Lemma~\ref{ggq-lemma-P20} we see that 
$P' = \{\eta \in \bfR^n: \nu' \cdot \eta = c'\}$ is a supporting plane 
to $\mathcal{C}_{\sigma}^+$ at $\mu$. By uniqueness of the tangent plane, $P' = P$ and therefore
$\nu' = \nu$. 
 \end{proof}

\bigskip

\section{The second order estimates}
\label{ggq-C2}
\setcounter{equation}{0}
\medskip

In this section we establish the second order estimates in Theorems~\ref{ggq-th-I10} and \ref{ggq-th-I20} for admissible solutions.
We first fix some notation.
In local coordinates $z = (z_{1}, \ldots,z_{n})$, we shall write 
\begin{equation}
\label{gqy-pre213}
\omega = \sqrt{-1} g_{i\bj} dz_i \wedge dz_{\bj}
\end{equation}
and $\{g^{i\bj}\} = \{g_{i\bj}\}^{-1}$.
The Christoffel symbols $\Gamma_{ij}^k$,
torsion and curvature tensors are given respectively by
\[ \nabla_{\frac{\partial}{\partial z_i}} \frac{\partial }{\partial z_j}
= \Gamma_{ij}^k \frac{\partial}{\partial z_k}, 
\;\; T^k_{ij} = \Gamma_{ij}^k  - \Gamma_{ji}^k \]
and
\[  R_{i\bj k\bl} = - g_{m\bl}  \frac{\partial \Gamma_{ik}^m}{\partial \bz_j}
  = - \frac{\partial g_{k\bl}}{\partial z_i \partial \bz_j}
      +   g^{p\bq} \frac{\partial g_{k\bq}}{\partial z_i} \frac{\partial g_{p\bl}}{\partial \bz_j}
      \]
where $\nabla$ is the Chern connection on $(M^n, \omega)$.

Let $u \in C^{4} (M)$ be an admissible solution of 
equation~\eqref{ggq-I10Lambda}. 
We denote
\[ \fg [u] = \sqrt{-1} \partial \bpartial u + X [u] \]
and define $G$ by
\[ G (\fg [u]) = f (\Lambda (\fg [u])). \]
Consequently,  equation~\eqref{ggq-I10Lambda} takes the form
\begin{equation}
\label{ggq-I20gg}
G (\fg [u]) = \psi [u].
\end{equation}

When $K= n-1$, we have $N = n$ and 
\[ \Lambda (\fg [u]) = \lambda ((\tr \fg [u]) \omega - \fg [u]). \]
We see that equation~\eqref{ggq-I20g} is covered by equation~\eqref{ggq-I20gg}
for 
\[ X [u] = \frac{\tr \chi [u]}{n-1} \omega - \chi [u]. \]

In local coordinates, we write
\[ \fg [u] = \sqrt{-1} \fg_{i\bj} dz_i \wedge dz_{\bj} 
      = \sqrt{-1} (\partial_i \bpartial_j u + X_{i\bj}) dz_i \wedge dz_{\bj} \]
and differentiate equation~\eqref{ggq-I20gg}
 twice to obtain
\begin{equation}
\label{gblq-C65'}
G^{i\bj} \nabla_p \fg_{i\bj} = \nabla_p \psi [u]
\end{equation}
and 
\begin{equation}
\label{gblq-C75'}
  G^{i\bj} \nabla_{\bq} \nabla_p \fg_{i\bj} 
   =  \nabla_{\bq} \nabla_p \psi  [u]
    - G^{i\bj, k\bl} \nabla_p \fg_{i\bj}  \nabla_{\bq} \fg_{k\bl},
\end{equation}
where
\[ G^{i\bj} = \frac{\partial G}{\partial \fg_{i\bj}} (\fg [u]), \;\; 
    G^{i\bj, k\bl} 
     = \frac{\partial^2 G}{\partial \fg_{i\bj} \partial \fg_{k\bl}} (\fg [u]). \]

To estimate $|\partial \bpartial u|$, as in \cite{GN} we follow ideas of Tosatti-Weinkove~\cite{TW2017} and consider the quantity which is given in local coordinates by
\begin{equation}
\label{gblq-C10A}
 A := \sup_{z \in M} \max_{\xi \in T^{1,0}_z M}
  \; e^{(1 + \gamma) \phi} \fg_{p\bq} \xi_p \bar{\xi_q}
   (g^{k\bl} \fg_{i\bl} \fg_{k\bj} \xi_i \bar{\xi_j})^{\frac{\gamma}{2}}/|\xi|^{2 + \gamma}
\end{equation}
where $\phi$ is a function depending on $u$ and $|\nabla u|$, and $\gamma > 0$ is a small constant to be determined.

Assume that $A$ is achieved at an interior point $z_0 \in M$ for some 
$\xi \in T^{1,0}_{z_0} M$.
We choose local coordinates around $z_0$  such that
$g_{i\bj} = \delta_{ij}$ and $T_{ij}^k = 2 \Gamma_{ij}^k$ 
using the lemma of Streets and Tian~\cite{ST11}, and that $\fg_{i\bj}$ is diagonal 
at $z_0$ with
\[ \fg_{1\bar 1} \geq \fg_{2 \bar 2} \geq \cdots \geq \fg_{n\bn}. \]
As noted in \cite{TW2017} the maximum $A$ is achieved for $\xi = \partial_1$ at $z_0$ when $\gamma > 0$ is sufficiently small; see also \cite{GN}.
We assume $\fg_{1\bar{1}} \geq 1$;  otherwise we are done.  


Let $W = g_{1\bar{1}}^{-1} g^{k\bl} \fg_{1\bl} \fg_{k\bar{1}}$.
We see that the function
$e^{(1+\gamma)  \phi} g_{1\bar{1}}^{-1} \fg_{1\bar{1}} W^{\frac{\gamma}{2}}$ which is  locally well defined
attains a maximum $A = (e^{\phi} \fg_{1\bar{1}})^{1+\gamma}$ at $z_0$ where
$W = \fg_{1\bar{1}}^2$,
\begin{equation}
\label{gblq-C80'}
 \begin{aligned}
\frac{\partial_i (g_{1\bar{1}}^{-1} \fg_{1\bar{1}})}{\fg_{1\bar{1}}}
   +  \frac{\gamma \partial_i W}{2 W} + (1+ \gamma)  \partial_i \phi = \,& 0, \\
\frac{\bpartial_i (g_{1\bar{1}}^{-1} \fg_{1\bar{1}})}{\fg_{1\bar{1}}}
   +\frac{ \gamma \bpartial_i W}{2 W} + (1+ \gamma) \bpartial_i \phi = \,& 0
\end{aligned}
\end{equation}
for each $1 \leq i \leq n$, and
\begin{equation}
\label{gblq-C90'}
\begin{aligned}
0 \geq \,&
   \frac{1}{\fg_{1\bar{1}}} G^{i\bi} \bpartial_i \partial_i (g_{1\bar{1}}^{-1} \fg_{1\bar{1}})
   - \frac{1}{\fg_{1\bar{1}}^2} G^{i\bi}
     \partial_i (g_{1\bar{1}}^{-1} \fg_{1\bar{1}}) \bpartial_i (g_{1\bar{1}}^{-1} \fg_{1\bar{1}}) \\
  & + \frac{\gamma}{2 W} G^{i\bi} \bpartial_i \partial_i W
     - \frac{\gamma}{2 W^2} G^{i\bi} \partial_i W \bpartial_i W
   + (1 + \gamma) G^{i\bi} \bpartial_i \partial_i \phi.
\end{aligned}
\end{equation}

In what follows we shall make use of calculations in \cite{GN}; for the reader's convenience we give a brief outline. First,
\begin{equation}
\label{gn-S115}
\begin{aligned}
\partial_i (g_{1\bar{1}}^{-1} \fg_{1\bar{1}}) = \nabla_i \fg_{1\bar{1}}, \;\;
 \partial_i W 
 = 2 \fg_{1\bar{1}} \nabla_i \fg_{1\bar{1}},
     \end{aligned}
      \end{equation}
\begin{equation}
\label{gn-S108}
 \begin{aligned}
\bpartial_j  \partial_i (g_{1\bar{1}}^{-1} \fg_{1\bar{1}})
       = \,& \nabla_{\bj} \nabla_i \fg_{1\bar{1}}
    + (\ol{\Gamma_{j1}^m} \nabla_i \fg_{1\bm}
        - \ol{\Gamma_{j1}^1} \nabla_i \fg_{1\bar{1}}) \\
      & + (\Gamma_{i1}^m \nabla_{\bj} \fg_{m\bar{1}}
            - \Gamma_{i1}^1 \nabla_{\bj} \fg_{1\bar{1}})
      + (\Gamma_{i1}^1  \ol{\Gamma_{j1}^1} - \Gamma_{i1}^m \ol{\Gamma_{j1}^m}) \fg_{1\bar{1}}.
     \end{aligned}  
 \end{equation}     
      and
\begin{equation}
\label{gn-S110}
 \begin{aligned}
\bpartial_j \partial_i W
   = \,& 2 \fg_{1\bar{1}} \nabla_{\bj} \nabla_i \fg_{1\bar{1}}
   + 2 \nabla_i \fg_{1\bar{1}} \nabla_{\bj} \fg_{1\bar{1}}
   + \sum_{l>1} \nabla_i \fg_{l\bar{1}} \nabla_{\bj} \fg_{1\bl}    \\
     &  + \sum_{l > 1} (\nabla_i \fg_{1\bl} + {\Gamma_{i1}^l} \fg_{l\bl})
       (\nabla_{\bj} \fg_{l\bar{1}} + \ol{\Gamma_{j1}^l} \fg_{l\bl}) \\
   & + \fg_{1\bar{1}} \sum_{l > 1} (\ol{\Gamma_{j1}^l} \nabla_i \fg_{1\bl}
       + \Gamma_{i1}^l  \nabla_{\bj} \fg_{l\bar{1}}) \\
  &    -  \fg_{1\bar{1}} \sum_{l > 1} \Gamma_{i1}^m \ol{\Gamma_{j1}^m} (\fg_{1\bar{1}} +  \fg_{l\bl)}.
              \end{aligned}
 \end{equation}
It follows that
\begin{equation}
\label{gblq-C103}
\begin{aligned}
G^{i\bi} \partial_i W \bpartial_i W
= 4 \fg_{1\bar{1}}^2 G^{i\bi} \nabla_i \fg_{1\bar{1}} \nabla_{\bi} \fg_{1\bar{1}},
\end{aligned}
\end{equation}
\begin{equation}
\label{gblq-C105}
\begin{aligned}
G^{i\bi} \partial_i (g_{1\bar{1}}^{-1} \fg_{1\bar{1}}) \bpartial_i (g_{1\bar{1}}^{-1} \fg_{1\bar{1}})
= G^{i\bi} \nabla_i \fg_{1\bar{1}} \nabla_{\bi} \fg_{1\bar{1}}
\end{aligned}
\end{equation}
and, by Cauchy-Schwarz inequality,
\begin{equation}
\label{gblq-C100}
 \begin{aligned}
G^{i\bi} \bpartial_i \partial_i W
\geq \,& 2 \fg_{1\bar{1}} G^{i\bi} \nabla_{\bi} \nabla_i \fg_{1\bar{1}}
    + 2 G^{i\bi}  \nabla_i \fg_{1\bar{1}} \nabla_{\bi} \fg_{1\bar{1}} \\
  + \,& \sum_{l > 1} G^{i\bi} \nabla_i \fg_{1\bl} \nabla_{\bi} \fg_{l\bar{1}}
  + \frac{1}{2} \sum_{l > 1} G^{i\bi} \nabla_i \fg_{1\bl} \nabla_{\bi} \fg_{l\bar{1}}
   - C \fg_{1\bar{1}}^2 \sum G^{i\bi},
 \end{aligned}
 \end{equation}
\begin{equation}
\label{gblq-C101}
 \begin{aligned}
G^{i\bi} \bpartial_i  \partial_i (g_{1\bar{1}}^{-1} \fg_{1\bar{1}})
\geq \,& G^{i\bi} \nabla_{\bi} \nabla_i \fg_{1\bar{1}}
  - \frac{\gamma}{8 \fg_{1\bar{1}}} \sum_{l > 1} G^{i\bi}
            \nabla_i \fg_{1\bl} \nabla_{\bi} \fg_{l\bar{1}}
        - C \fg_{1\bar{1}} \sum G^{i\bi}.
 \end{aligned}
 \end{equation}
 We derive from \eqref{gblq-C80'}, \eqref{gblq-C90'} and 
 \eqref{gblq-C103}-\eqref{gblq-C101} that
  \begin{equation}
\label{gblq-C80''}
 \begin{aligned}
\nabla_i \fg_{1\bar{1}} + \fg_{1\bar{1}} \partial_i \phi = 0, \;
\nabla_{\bi} \fg_{1\bar{1}} + \fg_{1\bar{1}} \bpartial_i \phi = 0
\end{aligned}
\end{equation}
and
 \begin{equation}
\label{gblq-C91}
\begin{aligned}
0  \geq \,& \frac{1}{\fg_{1\bar{1}}}  G^{i\bi} \nabla_{\bi} \nabla_i \fg_{1\bar{1}}
 - \frac{1}{\fg_{1\bar{1}}^2} G^{i\bi}  \nabla_i \fg_{1\bar{1}} \nabla_{\bi} \fg_{1\bar{1}}
   + G^{i\bi} \bpartial_i \partial_i \phi \\
 & + \frac{\gamma}{\fg_{1\bar{1}}^2} \sum_{l > 1} G^{i\bi} \nabla_i \fg_{1\bl} \nabla_{\bi} \fg_{l\bar{1}} + \frac{\gamma}{16 \fg_{1\bar{1}}^2} \sum_{l > 1} G^{i\bi}
            \nabla_i \fg_{1\bl} \nabla_{\bi} \fg_{l\bar{1}}
        - C \sum G^{i\bi}.
\end{aligned}
\end{equation}

As in \cite{GN} and \cite{GQY19} we have
\begin{equation}
\label{gblq-R155}
 \begin{aligned}
 \nabla_{\bi} \nabla_i \fg_{1\bar{1}}  -  \nabla_{\bar{1}} \nabla_1 \fg_{i\bi}
   = \,&  R_{i\bi 1\bar{1}} \fg_{1\bar{1}} - R_{1\bar{1} i\bi} \fg_{i\bi}
         - T_{i1}^l \nabla_{\bi} \fg_{l\bar{1}}  - \ol{T_{i1}^l} \nabla_i \fg_{1\bl} \\
     &  - T_{i1}^l  \ol{T_{i1}^l} \fg_{l\bl} + H_{i\bi}
  \end{aligned}
 \end{equation}
  where
\[ \begin{aligned}
    H_{i\bi} = \,& \nabla_{\bi} \nabla_i X_{1\bar{1}}
                       -  \nabla_{\bar{1}} \nabla_1 X_{i\bi}
      - 2 \fRe\{T_{i1}^l \nabla_{\bi} X_{l\bar{1}}\} 
      + R_{i\bi 1\bl} X_{l\bar{1}} - R_{1\bar{1} i\bl} X_{l\bi}
       - T_{i1}^j  \ol{T_{i1}^l} X_{j\bl}.
  \end{aligned} \]
 It follows from Schwarz inequality that 
\begin{equation}
\label{gblq-R156}
 \begin{aligned}
G^{i\bi} \nabla_{\bi} \nabla_i \fg_{1\bar{1}}
   \geq \,& G^{i\bi} \nabla_{\bar1} \nabla_1\fg_{i\bi}
         -  \frac{\gamma}{32 \fg_{1\bar{1}}} 
             G^{i\bi}  \nabla_i \fg_{1\bl} \nabla_{\bi} \fg_{l\bar{1}} \\
       &  - C \fg_{1\bar{1}} \sum G^{i\bi} + G^{i\bi} H_{i\bi}.
 \end{aligned}
 \end{equation}
From \eqref{gblq-C75'},  \eqref{gblq-C80''}, \eqref{gblq-C91} and \eqref{gblq-R156} 
we derive
 \begin{equation}
\label{gblq-C90}
\begin{aligned}
\fg_{1\bar{1}} G^{i\bi} \bpartial_i \partial_i \phi
   \leq  \,&  - \nabla_{\bar{1}} \nabla_1 \psi + 
                 \frac{1 + \gamma}{\fg_{1\bar{1}}} G^{i\bi}  \nabla_i \fg_{1\bar{1}} \nabla_{\bi} 
                 \fg_{1\bar{1}} - G^{i\bi} H_{i\bi} \\
      & -  \frac{\gamma}{32 \fg_{1\bar{1}}} \sum_{l > 1} G^{i\bi}
            (\nabla_i \fg_{l\bar{1}} \nabla_{\bi} \fg_{1\bl} + \nabla_i \fg_{1\bl} \nabla_{\bi} \fg_{l\bar{1}}) +  C \fg_{1\bar{1}} \sum G^{i\bi} \\
       \leq  \,&  - \nabla_{\bar{1}} \nabla_1 \psi + 
    (1+\gamma) \fg_{1\bar{1}} G^{i\bi}  \nabla_i \phi \nabla_{\bi} \phi - G^{i\bi} H_{i\bi}  +  C \fg_{1\bar{1}} \sum G^{i\bi}.
 \end{aligned}
\end{equation}

By straightforward calculations (see e.g. \cite{GN}, \cite{GQY19}),
\begin{equation}
\label{gblq-R155.4}
\begin{aligned}
G^{i\bi} H_{i \bi} 
   \geq \,&  2 G^{i\bi} \fRe\{X_{1 \bar{1}, \zeta_{\alpha}}
                     \nabla_{\alpha} \nabla_{\bi} \nabla_i   u\}
                 -  2 G^{i\bi} \fRe\{X_{i \bi, \zeta_{\alpha}} \nabla_{\alpha} 
                     \nabla_{\bar{1}} \nabla_1 u\} \\
           & - C \fg_{1\bar{1}}^2 \sum G^{i\bi}  
               - C \sum_{i, k} G^{i\bi} |\nabla_i  \nabla_{k} u|^2
               - C \sum_k |\nabla_1 \nabla_k u|^2 \sum G^{i\bi}  \\
 \geq \,& 2 \fRe\{X_{1 \bar{1}, \zeta_{\alpha}}  \nabla_{\alpha} \psi\}
          + 2 \fg_{1\bar{1}} G^{i\bi} \fRe\{X_{i \bi, \zeta_{\alpha}}  \nabla_{\alpha} \phi\} 
          - C |A|^2 \sum G^{i\bi}.
             \end{aligned}
 \end{equation}  
Henceforth for convenience we denote
\[ |A_i|^2 = \fg_{i\bi}^2 + \sum_{k} |\nabla_i \nabla_k u|^2, \;\; 
    |A|^2 =  \sum |A_i|^2. \]
Next, 
\begin{equation}
\label{gblq-R155.6}
\nabla_{\alpha} \psi = \psi_{\alpha} + \psi_u \nabla_{\alpha} u 
        + \psi_{\zeta_\beta} \partial_{\alpha} \partial_{\beta} u 
        + \psi_{\bzeta_\beta} \partial_{\alpha} \bpartial_{\beta} u 
        \end{equation}        
and 
\begin{equation}
\label{gblq-R155.5}
\begin{aligned}
 \nabla_{\bar{1}} \nabla_1 \psi 
     \geq \,& \psi_{\zeta_\alpha} \nabla_{\alpha} \nabla_{\bar{1}} \nabla_1 u
                  + \psi_{\bzeta_\alpha} \nabla_{\balpha} \nabla_{\bar{1}} \nabla_1  u 
                  -  C |A_1|^2 \\ 
     \geq \,& - \fg_{1\bar{1}} \psi_{\zeta_\alpha} \nabla_{\alpha} \phi
                  - \fg_{1\bar{1}} \psi_{\bzeta_\alpha} \nabla_{\balpha} \phi -  C |A|^2. 
\end{aligned}
\end{equation}   
Finally, from \eqref{gblq-C90}-
\eqref{gblq-R155.5}
we derive
\begin{equation}
\label{gblq-C150}
\begin{aligned}
\fg_{1\bar{1}} G^{i\bi} (\bpartial_i \partial_i \phi 
    - (1 + \gamma) \bpartial_i  \phi  \partial_i \phi)
       \leq \,& C (\fg_{1\bar{1}} |\nabla \phi| + |A|^2) 
       \Big(1 + \sum G^{i\bi}\Big). 
\end{aligned}
\end{equation}

Let 
$\phi = \log (\eta/h)$ where $h = (1 - \gamma |\nabla u|^2)$, $\gamma$ as before, and 
$\eta$ is a smooth function to be chosen; we require $\gamma$ small to satisfy 
$2 \gamma |\nabla u|^2  \leq 1$.  
By straightforward calculations,
\begin{equation}
\label{hess-a264}
\partial_i  |\nabla u|^2
    =  \nabla_k u \nabla_i \nabla_{\bk} u + \nabla_{\bk} u \nabla_i \nabla_k u
\end{equation}
and 
\begin{equation}
\label{hess-a266}
  \begin{aligned}
 \bpartial_i \partial_i |\nabla u|^2 
   = \,& \nabla_i \nabla_{\bk} u \nabla_k \nabla_{\bi} u 
            +  \nabla_i \nabla_{k} u \nabla_{\bi} \nabla_{\bk} u \\
         &   + \nabla_{\bk} u \nabla_{\bi} \nabla_i \nabla_k u
            + \nabla_{k} u \nabla_{\bi} \nabla_i \nabla_{\bk} u \\
   = \,& \nabla_i \nabla_{\bk} u \nabla_k \nabla_{\bi} u 
            +  \nabla_i \nabla_{k} u \nabla_{\bi} \nabla_{\bk} u \\
         &   + \nabla_{\bk} u \nabla_k \nabla_{\bi} \nabla_i u 
            +  \nabla_k u \nabla_{\bk} \nabla_{\bi} \nabla_i u \\
         &   + R_{i\bi k\bl} \nabla_l u \nabla_{\bk} u
            - T^{l}_{ik} \nabla_{l\bi} u \nabla_{\bk} u 
            - \ol{T^{l}_{ik}} \nabla_{i\bl} u \nabla_k  u \\
     \geq  \,& (1 - \gamma) |A_i|^2 + \nabla_{\bk} u \nabla_k \nabla_{\bi} \nabla_i u 
                   + \nabla_k u \nabla_{\bk} \nabla_{\bi} \nabla_i u    - C |\nabla u|^2
\end{aligned}
\end{equation}
It follows that
\[  G^{i\bi} \bpartial_i  h  \partial_i h 
      \leq C \gamma^2 G^{i\bi} |A_i|^2
    \]
    and
\begin{equation}
  \begin{aligned}
- G^{i\bi}  \bpartial_i  \partial_i  h
    \geq \,& \gamma (1 - \gamma)  \sum 
          G^{i\bi} |A_i|^2 
           - C |A| \sum G^{i\bi} - C |A|.
\end{aligned} 
\end{equation}

Clearly $|\nabla \phi| \leq |\nabla h|/h + |\nabla \eta|/\eta$ and 
\begin{equation}
\label{hess-a268}
  \begin{aligned}
  G^{i\bi} \bpartial_i  \phi  \partial_i \phi 
    \leq \,&  \frac{1 + \gamma}{h^{2}} G^{i\bi} \bpartial_i  h  \partial_i h 
    + \frac{1 + \gamma}{\gamma \eta^2} G^{i\bi} \bpartial_i  \eta \partial_i \eta.
    \end{aligned} 
\end{equation}
We see that
\[ \begin{aligned}
G^{i\bi} (\bpartial_i \partial_i \phi - (1 + \gamma) \bpartial_i  \phi  \partial_i \phi)
   \geq \,& - G^{i\bi} \Big(\frac{\bpartial_i \partial_i h}{h}
                +  \frac{3 \gamma \bpartial_i  h  \partial_i h}{h^2}\Big) 
                - \frac{C}{\gamma} G^{i\bi} \bpartial_i  \eta \partial_i \eta 
                + G^{i\bi} \bpartial_i \partial_i \eta\\
   \geq\,& \gamma (1 - \gamma - C \gamma^2) \sum G^{i\bi} |A_i|^2 
                -  \frac{C}{\gamma \eta^2} G^{i\bi} \bpartial_i  \eta \partial_i \eta \\
            &  + \frac{1}{\eta}G^{i\bi} \bpartial_i \partial_i \eta - C |A| \sum G^{i\bi} - C |A|.
\end{aligned} \]
We can now fix $\gamma$ sufficiently small, further requiring that 
  $2 \gamma + C \gamma^2 \leq 1$, to derive
  \begin{equation}
\label{gblq-C150a}
\begin{aligned}
\gamma^2  \fg_{1\bar{1}}
    \sum G^{i\bi}|A_i|^2 
   +  \fg_{1\bar{1}} G^{i\bi} \Big(\frac{\bpartial_i \partial_i \eta}{\eta} - \frac{C \bpartial_i  \eta \partial_i \eta}{\eta^2}\Big)
      \leq   C |A|^2 \Big(1 + \sum G^{i\bi}\Big).
\end{aligned}
\end{equation}  

From Lemma~\ref{ggq-lemma-P30} we have the following key inequality
for each $i \geq 1$,
\begin{equation}
 \label{ggq-C2-100}
 G^{i\bi} = \sum_{l=1}^N f_{\Lambda_l} \frac{\partial \Lambda_l}{\partial \lambda_i} 
               = \sum_{i \in I} f_{\Lambda_I} \geq c_1 \sum_{l=1}^N f_{\Lambda_l}, 
\end{equation}                
where the sum $\sum_{i \in I} f_{\Lambda_I}$ is taken over all $I \in \fI_K$ with $i \in I$, since 
$\lambda (\fg) = (\fg_{1\bar{1}}, \ldots, \fg_{n\bn})$. 
Note that when $\fg_{i\bj}$ is diagonal,  so is $G^{i\bj}$. 

Consequently, by \eqref{gblq-C150a} we obtain
  \begin{equation}
 \label{gblq-C150b}
\begin{aligned}
\frac{c_1 \gamma^2 }{2} \fg_{1\bar{1}} |A|^2 \sum  f_{\Lambda_l}            
   +  \fg_{1\bar{1}} G^{i\bi} \Big(\frac{\bpartial_i \partial_i \eta}{\eta} - \frac{C \bpartial_i  \eta \partial_i \eta}{\eta^2}\Big)
      \leq   C |A|^2 
\end{aligned}
\end{equation}  
provided that $ \fg_{1\bar{1}}$ is large enough. 
  
By the concavity of $f$ and Schwarz inequality, we derive
\[  \begin{aligned}
    \sqrt{\fg_{1\bar1}}  \sum  f_{\Lambda_l}  
        = \,& \sqrt{\fg_{1\bar1}} \sum  f_{\Lambda_l}  
        - \sum  f_{\Lambda_l} \Lambda_l (\fg) + \sum  f_{\Lambda_l} \Lambda_l (\fg) \\
   \geq \,&  f (\sqrt{\fg_{1\bar1}} {\bf{1}}) - f (\Lambda (\fg)) 
        - \epsilon \sum  f_{\Lambda_l} \Lambda_l^2 - \frac{1}{4 \epsilon} \sum  f_{\Lambda_l} \\
        \geq \,&  \frac{c_0}{2} -  \epsilon \sum  f_{\Lambda_l} \Lambda_l^2 - \frac{1}{4 \epsilon} \sum  f_{\Lambda_l}
  \end{aligned} \]
by assumption~\eqref{ggq-I45}, provided that $\fg_{1\bar1}$ is sufficiently large.
So from \eqref{gblq-C150b}  we obtain
 \begin{equation}
 \label{ggq-C150}
\begin{aligned}
\fg_{1\bar{1}} |A|^2 \sum f_{\Lambda_l} + \sqrt{\fg_{1\bar{1}}} |A|^2            
   +  C  \fg_{1\bar{1}} G^{i\bi} \Big(\frac{\bpartial_i \partial_i \eta}{\eta} - \frac{C \bpartial_i  \eta \partial_i \eta}{\eta^2}\Big)
      \leq   C |A|^2. 
\end{aligned}
\end{equation}

To derive the interior estimate, 
following \cite{GW03a} we take $\eta$ to be a smooth 
function with compact support in $B_{r}\subset M$
satisfying
\begin{equation}
\label{2-22}
0\leq \eta \leq 1, ~~\eta|_{B_{\frac{r}{2}}}\equiv 1,
~~|\nabla \eta | \leq \frac{C \sqrt{\eta}}{r},  
~~|\nabla^{2} \eta| \leq \frac{C}{r^2},
\end{equation}
so that 
\begin{equation}
\label{2-23}
\begin{aligned}
   G^{i\bi} \bpartial_i \partial_i \eta - \frac{C}{\eta} G^{i\bi} \bpartial_i  \eta \partial_i \eta
\leq \frac{C}{r^2} \sum f_{\Lambda_i}.
\end{aligned} 
\end{equation}
We derive a bound $\eta \fg_{1\bar{1}} \leq C/r^2$ at $z_0$ which gives \eqref{ggn-I70}.

Suppose that $(M^n, \omega)$
is a Hermitian manifold with smooth boundary $\partial M$.
Taking $\eta = 1$ we obtain the global estimate \eqref{ggq-C2-120}.
 The boundary estimate \eqref{ggq-C2-130}  
may be derived as in \cite{GQY19} 
 with some minor modifications; we shall omit the details here.

\begin{remark}
\label{ggq-remark-C2-10}
When $X$ and $\psi$ are independent of $u$ and $\nabla u$, we obtain  
\begin{equation}
\label{ggq-C200}
\sup_{B_{\frac{r}{2}}} |\bpartial \partial u| 
   \leq \frac{C}{r^2} \Big\{1 + \sup_{B_r} |\partial u|^2\Big\}. 
\end{equation}
\end{remark}

\bigskip

\section{Gradient estimates}
\label{ggq-G}
\setcounter{equation}{0}

In this section we derive gradient estimates. As usual it requires suitable growth conditions of $X$ and $\psi$ on $u$ and its gradient. Concerning $X$, we assume the following sub-linear growth assumption
 for $D_{\zeta} X$ when $|\zeta|$ is sufficiently large,
\begin{equation}
\label{ggq-G10}
 |D_{\zeta} X (z, u, \zeta, \bzeta)| \leq \varrho_0 |\zeta|, 
 \;\; D_u X \leq (\varrho_0 |\zeta|^2 + \varrho_1) \omega
 \end{equation}
where $\varrho_1 = \varrho_1 (z, u)$ and 
 $\varrho_0 = \varrho_0 (z, u, |\zeta|) \rightarrow 0^+$ as $|\zeta| \rightarrow \infty$;
clearly we may assume $t \varrho_0 (z, u, t)$ to be increasing in $t > 0$.
It follows that 
\[ |X| \leq C \varrho_0 |\zeta|^2 + \varrho_1 (z, u) \]
for some function $\varrho_1$; we shall only need (for convenience we drop the 
constant $C$)  
\begin{equation}
\label{ggq-G15}
   X \leq (\varrho_0 |\zeta|^2 + \varrho_1) \omega 
   \end{equation}
   which will also be crucial to the existence, more precisely to the upper bound, of admissible solutions of the Dirichlet problem in Theorem~\ref{ggq-th-E30} in Section~\ref{ggq-E}.
We impose a similar condition on $\psi$ but, as in \cite{GQY19}, associate it with the growth of $f$:
\begin{equation}
\label{ggq-G20}
 |D_\zeta \psi (z, u, \zeta, \bzeta)| \leq 
    \varrho_0  f (|\zeta|^{2} {\bf{1}})/|\zeta|, \;\;
    - D_u \psi  \leq  \varrho_0  f (|\zeta|^{2} {\bf{1}}) + \varrho_1 (z, u)
\end{equation}    
which in particular implies 
\begin{equation}
\label{ggq-G25}
 \psi  \leq  \varrho_0  f (|\zeta|^{2} {\bf{1}}) + \varrho_1 (z, u).
\end{equation}

\begin{theorem}
\label{ggq-th-G10}
Let $u \in C^3 (\bar{B}_r)$ be an admissible solution of 
equation~\eqref{ggq-I10Lambda}.
Under conditions  
\eqref{3I-30}, \eqref{3I-40}, \eqref{3I-20w-Lambda}, 
\eqref{ggq-I50},  
\eqref{ggq-I60Lambda},  \eqref{ggq-G10} and \eqref{ggq-G20},
$u$ satisfies the following interior gradient estimate
\begin{equation}  
\label{ggq-30}
 |\partial u|^2 \leq \frac{C}{r^2} \Big\{1 + \sup_{\bar{B}_r} u - u\Big\} \;\; \mbox{in ${B}_{\frac{r}{2}}$}
\end{equation}
for some uniform constant $C > 0$. 
\end{theorem}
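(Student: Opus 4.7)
The plan is to adapt the Bernstein maximum-principle technique, parallel in spirit to the $C^{2}$ computation of Section~\ref{ggq-C2}, to the linearized operator $L = G^{i\bj} \partial_i \bpartial_j$. Set $M = \sup_{\bar B_{r}} u$ and $v = 1 + M - u \geq 1$. I would consider the test function
\begin{equation*}
W = \eta^{2} v^{-1} |\partial u|^{2},
\end{equation*}
where $\eta$ is the cutoff in \eqref{2-22}, and aim to bound $W$ by $C/r^{2}$. If $W$ achieves its maximum at an interior point $z_{0} \in B_{r}$, I pick coordinates at $z_{0}$ diagonalizing both $g_{i\bj}$ and $\fg_{i\bj}[u]$ with $\fg_{1\bar 1} \geq \cdots \geq \fg_{n\bn}$ as before. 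The critical point conditions for $\log W$ give
\begin{equation*}
\frac{\partial_{i} |\partial u|^{2}}{|\partial u|^{2}} = -\frac{2\partial_{i}\eta}{\eta} + \frac{\partial_{i} u}{v},
\end{equation*}
and the second-order inequality $L(\log W) \leq 0$ expands into
\begin{equation*}
\frac{G^{i\bi} \bpartial_{i}\partial_{i} |\partial u|^{2}}{|\partial u|^{2}} - \frac{G^{i\bi}\bpartial_{i}|\partial u|^{2}\partial_{i}|\partial u|^{2}}{|\partial u|^{4}} + \frac{2G^{i\bi}\bpartial_{i}\partial_{i}\eta}{\eta} - \frac{2G^{i\bi}\bpartial_{i}\eta\partial_{i}\eta}{\eta^{2}} - \frac{G^{i\bi}\bpartial_{i}\partial_{i} u}{v} - \frac{G^{i\bi}\partial_{i}u\,\bpartial_{i}u}{v^{2}} \leq 0.
\end{equation*}

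Next I use the formulas \eqref{hess-a264}, \eqref{hess-a266} from the $C^{2}$ section to expand $\bpartial_{i}\partial_{i}|\partial u|^{2}$: this produces the good quadratic quantity $(1-\gamma)G^{i\bi}|A_{i}|^{2}$ together with the third-derivative terms $\nabla_{\bk} u \, G^{i\bi}\nabla_{k}\nabla_{\bi}\nabla_{i} u$ and its conjugate. These third-derivative terms are eliminated by differentiating the equation once, as in \eqref{gblq-C65'}, which converts them to $\nabla_{\bk} u \, \nabla_{k}\psi[u]$ plus commutator terms controlled by curvature, torsion, and derivatives of $X[u]$. The hypotheses \eqref{ggq-G10}, \eqref{ggq-G20} are now applied: the sublinear growth $\varrho_{0}(|\zeta|) \to 0$ lets me estimate
\begin{equation*}
\bigl| \nabla_{\bk} u \cdot \nabla_{k}\psi[u]\bigr| + \bigl|G^{i\bi} X_{i\bi,\zeta_{\alpha}} \nabla_{\alpha} u \cdot |\partial u|^{2}\bigr| \leq \varrho_{0}|\partial u|^{2} f(|\partial u|^{2}\mathbf 1) + \varrho_{0} |\partial u|^{2}\sum G^{i\bi}|A_{i}|^{2} + \text{l.o.t.},
\end{equation*}
so that the bad $X$ and $\psi$ terms are absorbed into the good $G^{i\bi}|A_{i}|^{2}$ and into $f(|\partial u|^{2}\mathbf 1)\sum f_{\Lambda_{l}}$ once $|\partial u|$ is large. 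The term $-G^{i\bi}\bpartial_{i}\partial_{i} u/v$ equals $-(\sum G^{i\bi}\fg_{i\bi} - G^{i\bi}X_{i\bi})/v$ and is controlled using \eqref{ggq-I50} (which bounds $\sum f_{i}\lambda_{i}$ from below by $-C_{0}\sum f_{i}$) combined with \eqref{ggq-G15}, producing a term of size $C(1+|\partial u|^{2})\sum G^{i\bi}/v$. The extra positive term $G^{i\bi}\partial_{i}u\,\bpartial_{i}u/v^{2}$ provides a cushion against the mismatch between $\eta$ and $v$.

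Finally, as in the derivation following \eqref{gblq-C150b}, I invoke Lemma~\ref{ggq-lemma-P30} (the rank hypothesis \eqref{ggq-I60Lambda}) together with the concavity trick
\begin{equation*}
f\bigl(|\partial u|^{2}\mathbf 1\bigr) - f(\Lambda(\fg)) \leq |\partial u|^{2}\sum f_{\Lambda_{l}} - \sum f_{\Lambda_{l}}\Lambda_{l}(\fg)
\end{equation*}
and \eqref{ggq-I45} to obtain a lower bound $|\partial u|^{2}\sum f_{\Lambda_{l}} \geq c_{0}/2$ once $|\partial u|$ is sufficiently large. Combining these estimates with the cutoff bounds $|\partial\eta|^{2}/\eta \leq C/r^{2}$, $|\partial\bpartial\eta| \leq C/r^{2}$, I arrive at an inequality of the form
\begin{equation*}
\frac{c\,\eta^{2}|\partial u|^{4}}{v^{2}} \leq \frac{C\,\eta|\partial u|^{2}}{r^{2}\,v} + \frac{C}{r^{2}}
\end{equation*}
at $z_{0}$, which yields $W(z_{0}) \leq C/r^{2}$ and hence \eqref{ggq-30} on $B_{r/2}$.

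The main obstacle will be the simultaneous bookkeeping of the third-order $u$-derivatives appearing in $\bpartial_{i}\partial_{i}|\partial u|^{2}$ and the $X_{\zeta}$-derivatives inside $L(|\partial u|^{2})$; the sublinear growth \eqref{ggq-G10} is precisely what permits their absorption, but only after the rank-based lower bound \eqref{ggq-C2-100} is invoked to guarantee a nondegenerate coefficient on the coercive term $G^{i\bi}|A_{i}|^{2}$. A secondary subtlety is calibrating the powers of $v$ in the test function so that the final bound comes out linear in $1+\sup u - u$ rather than quadratic; the choice $W = \eta^{2} v^{-1}|\partial u|^{2}$ seems to be the right balance, but this may need to be tuned once the exact coefficients from the $X$- and $\psi$-growth terms are computed.
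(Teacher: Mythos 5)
Your proposal follows essentially the same route as the paper's own proof: a maximum-principle argument for the quantity $e^{\phi}|\nabla u|^2$ with $\phi=\log(\eta/h)$, $h=1+\sup u-u$ (your $W=\eta^2 v^{-1}|\partial u|^2$ is the same test function up to the harmless power of the cutoff), the same expansion of $\bpartial_i\partial_i|\nabla u|^2$, elimination of third derivatives via the once-differentiated equation \eqref{gblq-C65'}, the rank-based lower bound \eqref{ggq-C2-100} from Lemma~\ref{ggq-lemma-P30}, and the concavity inequality combined with \eqref{ggq-I50} and the growth hypotheses. The only small discrepancy is your appeal to \eqref{ggq-I45}, which is not among the hypotheses of Theorem~\ref{ggq-th-G10}; the paper instead keeps $f(|\nabla u|^{2}{\bf 1})$ as a good term on the left and absorbs the bad $\psi$- and $X$-contributions using \eqref{ggq-G20}, \eqref{ggq-G25} and \eqref{ggq-G15}, where $\varrho_0\rightarrow 0$ as $|\zeta|\rightarrow\infty$.
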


\begin{remark}
\label{ggq-remark-G10}
We may replace \eqref{ggq-G10} and \eqref{ggq-G20} by the weaker
assumptions ~\eqref{ggq-G15}, \eqref{ggq-G25} and the linear-like growth conditions
\begin{equation}
\label{ggq-G10w}
 |D_{\zeta} X (z, u, \zeta, \bzeta)| \leq \varrho (z, u) |\zeta|,
 \end{equation}
\begin{equation}
\label{ggq-G20w}
 |D_\zeta \psi (z, u, \zeta, \bzeta)| \leq 
    \varrho (z, u) f (|\zeta|^{2} {\bf{1}})/|\zeta|,
\end{equation}    
where $\varrho > 0$. In this case we can derive the bound
\begin{equation}  
\label{ggq-30'}
 |\partial u|^2 \leq \frac{C}{r^2} \Big\{1 + u - \inf_{\bar{B}_r} u\Big\} ^N \;\; \mbox{in ${B}_{\frac{r}{2}}$}
\end{equation}
for sufficiently large $N > 0$; see e.g. \cite{GQY19}. 
\end{remark}

\begin{remark}
\label{ggq-remark-G20}
When $\omega$ is K\"ahler, \eqref{ggq-G10w} and \eqref{ggq-G20w} can be replaced 
by the assumptions 
 \begin{equation}
\label{ggq-G10c}
 D^2_{p_i p_j} X (z, u, p) \xi_i \xi_j \leq \varrho (z, u) |\xi|^2, \;\; \mbox{for $\xi \in T_z M$}, 
 \end{equation}
that is, $X$ is semi-concave in $\nabla u$, the real gradient of $u$, and
\begin{equation}
\label{ggq-G20c}
 D^2_{p_i p_j} \psi (z, u, p) \xi_i \xi_j   \geq 
    - \varrho (z, u) f (|p|^{2} {\bf{1}}) |\xi|^2/|p|^2, \;\; \xi \in T_z M.
\end{equation}    

\end{remark}

\begin{proof}[Proof of Theorem~\ref{ggq-th-G10}]
Consider 
\[ \max_{\bM} e^{\phi}  |\nabla u|^2 \]
where $\phi$ is a function to be chosen.
Suppose it is attained at an interior point $z_0 \in M$. 
We choose local coordinates $(z_{1}, \ldots, z_{n})$ such that 
$g_{i\bj} = \delta_{ij}$, 
$T_{ij}^k = 2 \Gamma_{ij}^k$ and $\fg_{i\bj} $ are diagonal at $z_0$.
The function 
$\log |\nabla u|^2 + \phi$ 
achieves a maximum at $z_{0}$ where we assume $|\nabla u| \geq 1$.
Therefore at $z_0$, 
\begin{equation}
\label{gqy-G32}
\partial_i |\nabla u|^2 + |\nabla u|^2 \partial_i \phi = 0, \;\;
\bpartial_i |\nabla u|^2 + |\nabla u|^2 \bpartial_i \phi = 0
\end{equation}
and 
\begin{equation}
\label{gqy-G34}
G^{i\bi} \frac{\bpartial_i \partial_i |\nabla u|^2}{|\nabla u|^2}
- G^{i\bi} \frac{\bpartial_i |\nabla u|^2 \partial_i |\nabla u|^2}{|\nabla u|^4}
   + G^{i\bi} \bpartial_i \partial_i \phi  \leq 0.
\end{equation}

By  \eqref{hess-a264}, \eqref{hess-a266} and Schwarz inequality,
\begin{equation}
\label{hess-a264'}
\bpartial_i  |\nabla u|^2 \partial_i  |\nabla u|^2
    \leq 2  |\nabla u|^2 |Q_i|^2
    \end{equation}
    and
\begin{equation}
\label{hess-a267}
  \begin{aligned}
 \bpartial_i \partial_i |\nabla u|^2 
     \geq  \,& (1 - \gamma) |Q_i|^2 + 
          \nabla_{\bk} u \nabla_k \nabla_{\bi} \nabla_i u 
                   + \nabla_k u \nabla_{\bk} \nabla_{\bi} \nabla_i u    - C |\nabla u|^2.
\end{aligned}
\end{equation}
where $0 < \gamma < 1/6$ and 
\[ |Q_i|^2 = \nabla_i \nabla_{\bk} u \nabla_k \nabla_{\bi} u 
           +  \nabla_i \nabla_{k} u \nabla_{\bi} \nabla_{\bk} u
           = \sum_k (|\nabla_i \nabla_{\bk} u|^2 + |\nabla_i \nabla_{k} u|^2). \]
It follows from \eqref{gblq-C65'} that
\begin{equation}
\label{ggq-G120}
 G^{i\bi} \nabla_k \nabla_{\bi} \nabla_i u 
    = G^{i\bi} (\nabla_k \fg_{i\bi} - \nabla_k {X}_{i\bi}) 
     = \nabla_k \psi - G^{i\bi} \nabla_k {X}_{i\bi}. 
 \end{equation}   
 So 
\begin{equation}
\label{ggq-G45}
  \begin{aligned}
 G^{i\bi} \bpartial_i \partial_i |\nabla u|^2 
     \geq  \,&  G^{i\bi} (1 - \gamma) |Q_i|^2 - C |\nabla u|^2 \sum G^{i\bi} + R
  \end{aligned}
\end{equation}
where 
\[ R = 2 \fRe\{(\nabla_k \psi - G^{i\bi} \nabla_k X_{i\bi}) \nabla_{\bk}  u\}.  \]  
We derive
\begin{equation}
\label{ggq-G130}
\begin{aligned} 
 G^{i\bi} \bpartial_i \partial_i \phi 
     - \frac{1+ \gamma}{2} G^{i\bi} \bpartial_i \phi \partial_i \phi
  \leq \,
          &  
          - \frac{R}{|\nabla u|^2}  
              + C \sum G^{i\bi}.
\end{aligned}
\end{equation}

Let 
$\phi = \log (\eta/h)$ where $h = 1 + \sup_M u - u$. So
\[ G^{i\bi} \bpartial_i \partial_i \phi 
    = \frac{1}{h} G^{i\bi} \bpartial_i \partial_i u  
    + \frac{1}{h^2} G^{i\bi} \bpartial_i u \partial_i u
   +  \frac{1}{\eta} G^{i\bi}\bpartial_i \partial_i \eta 
    - \frac{1}{\eta^2} G^{i\bi}\bpartial_i \eta \partial_i \eta. \]
As before, $|\nabla \phi| \leq |\nabla u|/h + |\nabla \eta|/\eta$ and 
\[    \begin{aligned}
  G^{i\bi} \bpartial_i  \phi  \partial_i \phi 
    \leq \,&  \frac{1 + \gamma}{h^{2}} G^{i\bi} \bpartial_i  u  \partial_i u 
    + \frac{1 + \gamma}{\gamma \eta^2} G^{i\bi} \bpartial_i  \eta \partial_i \eta.
    \end{aligned} \]
From \eqref{ggq-C2-100} it follows that 
\begin{equation}
\label{ggq-G140}
  \begin{aligned}
   \frac{1}{h^2} G^{i\bi} \bpartial_i u \partial_i u 
- \frac{1+ \gamma}{2} G^{i\bi} \bpartial_i \phi \partial_i \phi
& \geq 
                \frac{1- 3 \gamma}{2 h^2} G^{i\bi} \bpartial_i  u  \partial_i u 
     -  \frac{C}{\eta^2} G^{i\bi} \bpartial_i  \eta \partial_i \eta \\
  \,&   \geq \frac{c_1 |\nabla u|^2}{4 h^2}  \sum G^{i\bi}
    - \frac{C}{\eta^2} G^{i\bi} \bpartial_i  \eta \partial_i \eta.
      \end{aligned}  
\end{equation}

By the concavity of $f$ and assumption~\eqref{ggq-I50}, 
\[  \begin{aligned}
   |\nabla u|^2 \sum G^{i\bi}
    \geq \,& f (|\nabla u|^2 {\bf{1}}) - f (\Lambda) + G^{i\bi} \fg_{i\bi} 
   \geq  f (|\nabla u|^2 {\bf{1}}) - \psi - C \sum  G^{i\bi},
      \end{aligned} \] 
Similarly,
\[ \begin{aligned}
  G^{i\bi} \bpartial_i \partial_i u 
 = \,& G^{i\bi}  \fg_{i\bi} - G^{i\bi}  X_{i\bi} 
    \geq - G^{i\bi}  X_{i\bi} - C \sum G^{i\bi}
                 \end{aligned}.    \]
 Combining the above inequalities, and taking $\eta$ as in the previous section,  we derive
  \begin{equation}
  \label{ggq-G150}
\begin{aligned}
\frac{c_1  |\nabla u|^2}{8 h^2} \sum G^{i\bi}
   + \frac{c_1}{8 h^2}  f (|\nabla u|^2 {\bf{1}}) 
     \leq \,& - \frac{1}{h} G^{i\bi}\bpartial_i \partial_i u  - \frac{1}{\eta} G^{i\bi}\bpartial_i \partial_i \eta 
    +  \frac{C}{\eta^2} G^{i\bi}\bpartial_i \eta \partial_i \eta \\
     &  +  \frac{c_1 \psi}{8 h^2} + C  \sum G^{i\bi} - \frac{R}{|\nabla u|^2}  \\
     \leq \,& \frac{1}{h} G^{i\bi} X_{i\bi} + \frac{c_1 \psi}{8 h^2} - \frac{R}{|\nabla u|^2}
              +  \frac{C}{\eta} \sum G^{i\bi}.
     \end{aligned}
\end{equation}  

Consequently, we obtain a bound $\eta |\nabla u|^2 \leq C/r^2$ if $\psi$ and $X$ are independent of $u$ and $\partial u$. 
In the general case, by \eqref{gblq-R155.6} and \eqref{gqy-G32},    
  \begin{equation}
\label{ggq-G160}
\begin{aligned}
\fRe\{\nabla_k \psi \nabla_{\bk}  u \} 
   = \, & \psi_u |\nabla u|^2         
           + \fRe\{\psi_k \nabla_{\bk} u + \psi_{\zeta_\alpha} \partial_{\alpha} |\nabla u|^2
           + \psi_{\zeta_\alpha} \Gamma_{\alpha k}^l \nabla_l u \nabla_{\bk} u\} \\  
  = \, &  |\nabla u|^2  (\psi_u - \fRe\{\psi_{\zeta_\alpha} \partial_{\alpha} \phi\})    
           + \fRe\{\psi_k \nabla_{\bk} u 
           + \psi_{\zeta_\alpha} \Gamma_{\alpha k}^l \nabla_l u \nabla_{\bk} u\} \\    
= \,&  |\nabla u|^2  A - \frac{|\nabla u|^2}{\eta}  
           \fRe\{\psi_{\zeta_\alpha} \partial_{\alpha}  \eta\}       
    \\
\geq  \,&  |\nabla u|^2  A - \frac{C |\nabla u|^2 |D_{\zeta} \psi|}{\sqrt{\eta}}   
 \end{aligned}
        \end{equation}    
and similarly, 
\begin{equation}
\label{ggq-G170}
 \begin{aligned}
G^{i\bi}  \fRe\{\nabla_{\bk} u \nabla_k X_{i\bi}\}  = \,& |\nabla u|^2 B 
   - \frac{|\nabla u|^2}{\eta} G^{i\bi} \fRe\{X_{i\bi, \zeta_{\alpha}} \partial_{\alpha} \eta\} \\
 \,& \leq  |\nabla u|^2 B 
   + \frac{C |\nabla u|^2 |D_{\zeta} X|}{\sqrt{\eta}} \sum G^{i\bi}   
 \end{aligned} 
\end{equation}
       where 
 \[ A = \psi_u - \frac{1}{h} \fRe\{\psi_{\zeta_\alpha} \partial_{\alpha} u\}
           + \frac{1}{|\nabla u|^2} \fRe\{\psi_k \nabla_{\bk} u
            + \psi_{\zeta_\alpha} \Gamma_{\alpha k}^l \nabla_l u \nabla_{\bk} u\}, \]                
\[ B =  G^{i\bi} X_{i\bi,u}  
          - \frac{1}{h} G^{i\bi} \fRe\{X_{i\bi, \zeta_{\alpha}} \partial_{\alpha} u\}
        + \frac{1}{|\nabla u|^2} G^{i\bi} \fRe\{(X_{i\bi k} 
         + X_{i\bi, \zeta_{\alpha}} \Gamma_{\alpha k}^l \nabla_l u) \nabla_{\bk} u\}. \]
By assumptions~\eqref{ggq-G10} and  \eqref{ggq-G20}, 
  \begin{equation}
  \label{ggq-G180}
    \begin{aligned}
   \frac{1}{h} G^{i\bi} X_{i\bi} + \frac{c_1 \psi}{8 h^2} - \frac{R}{|\nabla u|^2}
     \leq \, & C H \sum G^{i\bi} +  C E \\
               &  + \frac{C \varrho_0 |\nabla u|}{\sqrt{\eta}} \sum G^{i\bi} 
                   + \frac{C \varrho_0 f (|\nabla u|^2 {\bf{1}})}{ |\nabla u| \sqrt{\eta}} \\
     \end{aligned} 
\end{equation}
where
\[ E = |\nabla_z \psi| |\nabla u|^{-1} + (\psi_u)^-  + \psi^+ + |D_{\zeta} \psi| |\nabla u| 
            \leq  \varrho_0  f (|\nabla u|^2 {\bf{1}})  \]
by \eqref{ggq-G20} and \eqref{ggq-G25}, and 
\[ H = |\nabla_z X| |\nabla u|^{-1} + \tr X^+ + \tr (D_u X)^+  
            + |D_{\zeta} X| |\nabla u|  \leq \varrho_0 |\nabla u|^2 \]
by \eqref{ggq-G10} and \eqref{ggq-G15}. 
Plugging these back to \eqref{ggq-G180}, we obtain a bound $\eta |\nabla u|^2 \leq C/r^2$ at $z_0$ from which  \eqref{ggq-30} follows.
\end{proof}

\begin{remark}
\label{ggq-remark-G90}
Assume both $X$ and $\psi$ are independent of $u$. Taking $\eta = 1$ we obtain
\[ |\nabla u|^2  \leq C \Big(1 + \sup_M u - u\Big) \;\; \mbox{on M} \]
for any admissible solution $u \in C^3 (M)$ of equation~\eqref{ggq-I10Lambda}.
It follows that 
\[ \Big|\Big(1 + \sup_M u - u (x) \Big)^{\frac{1}{2}} - \Big(1 + \sup_M u - u (y) \Big)^{\frac{1}{2}}\Big| \leq C \mbox{dist} (x, y), \;\; \forall x, y \in M \]
 where $\mbox{dist} (x, y)$ denotes the distance 
 between $x$ and $y$ in $(M, \omega)$. In particular,
\begin{equation}
\label{ggq-G215}
 \sup_M u - \inf_M u \leq C d^2 
 \end{equation}
where $d$ is the diameter of $M$. 
 \end{remark}

\begin{remark}
\label{ggq-remark-G90}
Assume $u > 0$ on $B_r$ and take $h = u^2$ in the previous proof. We can derive
a bound $\eta |\nabla u|^2 \leq C/r^2$ at $z_0$ if $\psi$ and $X$ are independent of $u$ and $\partial u$.  This gives 
\[ \frac{|\nabla u|^2}{u^2} \leq C \;\; \mbox{in $B_{\frac{r}{2}}$}. \]
Consequently, we obtain the Harnack inequality
\[ \sup_{B_{\frac{r}{2}}} u \leq C \inf_{B_{\frac{r}{2}}} u. \]
 \end{remark}

 \begin{remark}
\label{ggq-remark-G100}
For a Hermitian manifold $(M^n, \omega)$ with boundary $\partial M$, 
taking $\eta = 1$ we derive the global gradient estimate
\[ \max_{\bM} |\partial u| \leq C \max_{\partial M} |\partial u| + C. \] 
\end{remark}

\bigskip

\section{Existence}
\label{ggq-E}
\setcounter{equation}{0}

In this section we outline proofs of Theorems~\ref{ggq-th-I30}, \ref{ggq-th-I40}.
With the aid of the {\em a priori} estimates in the specific forms derived in the previous sections we can extend these theorems 
to cover more general cases where $X$ and $\psi$ are allowed to depend on 
$u$ and $\nabla u$.

The following result extends Theorem~\ref{ggq-th-I40}.

\begin{theorem}
\label{ggq-th-I40'}
Let $(M^n, \omega)$ be a compact Hermitian manifold. Assume that
 \eqref{3I-30}, \eqref{3I-20w-Lambda}, 
\eqref{ggq-I50}, \eqref{ggq-I60Lambda}, \eqref{3I-40'}, \eqref{ggq-I45infty}, \eqref{ggq-G10}
and \eqref{ggq-G20} hold. In addition, assume that 
$X = X (z, \nabla u)$ and $\psi = \psi (z, \nabla u)$ are both independent of $u$,
and $X (z, 0) > 0$ on $M$. 
Then there exists a unique constant $b$ such that the equation 
\begin{equation}
\label{ggq-I10LC'}
  f (\Lambda (\sqrt{-1} \partial \bpartial u + X [u]))
      = e^b \psi [u]
\end{equation}
has a unique admissible solution $u \in C^{\infty} (M)$ up to a constant.
\end{theorem}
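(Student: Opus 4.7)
The plan is to combine an $\epsilon$-regularization with the continuity method in order to handle the translation invariance arising from the fact that $X$ and $\psi$ are independent of $u$. For each $\epsilon>0$, I would first establish existence of an admissible $u_\epsilon \in C^\infty(M)$ solving
\begin{equation*}
f(\Lambda(\sqrt{-1}\partial\bpartial u_\epsilon + X(z,\nabla u_\epsilon))) = e^{\epsilon u_\epsilon}\,\psi(z,\nabla u_\epsilon).
\end{equation*}
Since $X(z,0)>0$ and \eqref{ggq-I45infty} holds, $u\equiv 0$ is admissible and can serve as starting point for an openness--closedness argument in a continuity parameter that deforms the coefficients to the trivial pair. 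Applying the maximum principle at extremal points of $u_\epsilon$, where $\nabla u_\epsilon=0$ and $\sqrt{-1}\partial\bpartial u_\epsilon$ has a sign, combined with the monotonicity \eqref{3I-20w-Lambda} and the positivity of $X(z,0)$ and $\psi$, produces a uniform two-sided bound $|\epsilon u_\epsilon|_{C^0}\le C$ independent of $\epsilon$.

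Next I would derive $\epsilon$-independent $C^k$ estimates for the normalized $\tilde u_\epsilon := u_\epsilon-\sup_M u_\epsilon$. Because $\epsilon u_\epsilon$ is uniformly bounded, the modified right-hand side $e^{\epsilon u_\epsilon}\psi$ still satisfies the growth hypotheses \eqref{ggq-G10}, \eqref{ggq-G20} with constants independent of $\epsilon$. The global version of Theorem~\ref{ggq-th-G10} (closed-manifold case, $\eta\equiv 1$) yields a gradient bound in terms of $\sup_M u_\epsilon-\inf_M u_\epsilon$, and Remark~\ref{ggq-remark-G90} (the bound $\sup_M u - \inf_M u \le Cd^2$) then supplies a uniform $C^0$ bound on $\tilde u_\epsilon$, and hence a uniform $C^1$ bound. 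Theorem~\ref{ggq-th-I20} applied globally ($\eta\equiv 1$, empty boundary) provides a uniform $C^2$ bound under \eqref{ggq-I50} and the rank condition \eqref{ggq-I60Lambda}; the complex Evans--Krylov theorem and Schauder estimates then promote these to uniform $C^{2,\alpha}$ and, by bootstrap, $C^k$ bounds for every $k$.

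Passing to a subsequence $\epsilon_j\to 0$, $\tilde u_{\epsilon_j}$ converges in $C^\infty(M)$ to an admissible $u$ with $\sup_M u=0$, while $\epsilon_j \sup_M u_{\epsilon_j}$ converges to some constant $-b$, and $u$ solves \eqref{ggq-I10LC'}. Uniqueness of $b$ and of $u$ modulo additive constants follows from a maximum principle argument: for two solutions $(u_1,b_1)$, $(u_2,b_2)$, at a maximum of $w:=u_1-u_2$ one has $\nabla u_1=\nabla u_2$ and $\sqrt{-1}\partial\bpartial u_1\le\sqrt{-1}\partial\bpartial u_2$, so monotonicity \eqref{3I-20w-Lambda} of $G=f\circ\Lambda$ forces $e^{b_1}\psi\le e^{b_2}\psi$ at that point, i.e., $b_1\le b_2$; by symmetry $b_1=b_2$, and the strong maximum principle for the linearized operator (whose ellipticity directions span the cone furnished by \eqref{ggq-I60Lambda}) then yields $u_1-u_2\equiv\text{const}$.

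The hardest step will be the $\epsilon$-uniform control of the normalized solutions. The factor $e^{\epsilon u}$ is harmless for the second-order calculation of Section~\ref{ggq-C2} once $|\epsilon u_\epsilon|_{C^0}$ is bounded; the subtle point is that Remark~\ref{ggq-remark-G90} is stated for $X,\psi$ independent of $u$, so one must retrace the proof of Theorem~\ref{ggq-th-G10} with $h=1+\sup_M u_\epsilon - u_\epsilon$, carefully accounting for the additional term $\epsilon e^{\epsilon u_\epsilon}\psi$ arising from $\psi_u$, and verify that the oscillation estimate $\sup_M u_\epsilon - \inf_M u_\epsilon \le Cd^2$ survives with a constant depending only on the uniform bound of $\epsilon u_\epsilon$ and not on $\epsilon$ separately.
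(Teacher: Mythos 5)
Your proposal is correct and takes essentially the same route as the paper: the paper regularizes with the factor $e^{tu}$ along a single continuity path (adding $tA\omega$ and interpolating $\psi^t = tH+(1-t)\psi$ so that $u\equiv 0$ solves the $t=1$ equation), gets the uniform bound on $t\sup_M u^t$ and $t\inf_M u^t$ by the same maximum-principle argument at extrema, and passes to $t\to 0$ using the oscillation bound \eqref{ggq-G215} together with the gradient, second-order and Evans--Krylov estimates, exactly as you do with $\epsilon$. The differences are only organizational (you separate the regularization parameter from the continuity parameter, and you spell out the uniqueness argument the paper leaves implicit), so no further comparison is needed.
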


\begin{proof}
To begin with we assume $X [u] = X (z, u, \nabla u)$
and $\psi [u] = \psi (z, u, \nabla u)$ satisfying 
\begin{equation}
\label{ggq-E10}
 - D_u X, \; D_u \psi \geq 0. 
\end{equation}
First we choose $A > 0$ sufficiently large
such that $H \equiv f (\Lambda (A \omega)) > 0$. 
For $t \in [0, 1]$ consider the equation
\begin{equation}
\label{ggq-I10LCt}
  f (\Lambda (\sqrt{-1} \partial \bpartial u + X [u] + t A \omega))
      = e^{t u} \psi^t [u]
\end{equation}
where 
$\psi^t [u] =  
t H + (1-t) \psi [u]$.  
Note that for $t > 0$, by \eqref{ggq-E10}
\[ D_u (e^{t u} \psi^t [u]) \geq H t^2 e^{t u} > 0. \]
So the solution of equation~\eqref{ggq-I10LCt} is unique and its
linearized operator is nonsingular for $t > 0$. When $t = 1$ the solution is $u^1 = 0$. 

Suppose $u \in C^{\infty} (M)$ is the admissible solution of 
equation~\eqref{ggq-I10LCt} 
for fixed $t > 0$, and assume 
$\sup_M u \geq 0$. At a point where $u$ attains its maximum value,    
by \eqref{ggq-E10} we have
\[ \begin{aligned}
   e^{t u} \psi^t (z, 0, 0) \leq \,& e^{t u} \psi^t (z, u, 0) \\
   \leq \,& f (\Lambda (X (z, u, 0) + t A \omega) 
   \leq  f (\Lambda (X (z, 0, 0) + t A \omega). 
   \end{aligned}\]
Therefore,
\begin{equation}  
\label{ggq-E80}
t \sup_M u \leq
  \sup_{z \in M}  \log \frac{f (\Lambda (X (z, 0, 0) + A \omega)}{\psi^t (z, 0, 0)} \leq C. 
\end{equation}
Similarly, suppose $\inf_M u \leq 0$. Then 
\begin{equation}  
\label{ggq-E90}
t \inf_M u \geq \inf_{z \in M}  \log \frac{f (\Lambda (X (z, 0, 0))}{\psi^t (z, 0, 0)} \geq - C. 
\end{equation}
 Consequently, we may apply the continuity method to obtain a unique admissible solution $u^t \in C^{\infty} (M)$ of equation~\eqref{ggq-I10LCt} for all $t \in (0, 1]$ with the bound
 \begin{equation}  
\label{ggq-E100}
- C \leq t \inf_M u^t \leq t \sup_M u^t  \leq C, \;\; 0 < t \leq 1
\end{equation}
where $C$ is independent of $t$. 

By \eqref{ggq-E100} we can find a sequence $t_k \rightarrow 0$ such that
both of the following limits exist
\[ \lim_{k \rightarrow \infty} t_k \inf_M u^{t_k} = a, \;\;  
   \lim_{k \rightarrow \infty} t_k \sup_M u^{t_k} = b. \]

Suppose now that $X$ and $\psi$ are both independent of $u$. By \eqref{ggq-G215} we see that $a = b$ and therefore
\[  \lim_{k \rightarrow \infty} t_k u^{t_k} = b \;\; \mbox{on $M$}. \]
Moreover, by \eqref{ggq-G215} and the estimates we have established there exists a subsequence of
\[ \Big\{u^{t_k} - \sup_M u^{t_k}\Big\}  \]
converging in $C^{2, \alpha} (M)$
to a smooth admissible solution $u$ of equation~\eqref{ggq-I10LC'}. Clearly,
\[ \inf_M u \geq - C d^2,  \;\; \sup_M u = 0 \]
where $d$ is the diameter of $M$. 
\end{proof}

We next consider the Dirichlet problem
\begin{equation}
\label{ggq-I10LG}
 \left\{
\begin{aligned}
 f (\Lambda (\sqrt{-1} \partial \bpartial u + X [u])) = \,& \psi [u]  \; \mbox{in $\bM$}, \\
 u = \varphi \; \mbox{on $\partial M$}.
\end{aligned} \right.
\end{equation}
Here $(M^n, \omega)$ is assumed as in Theorem~\ref{ggq-th-I30}
to be a Hermitian manifold with smooth boundary 
$\partial M$ and compact closure $\bM = M \cup \partial M$, and 
 $\varphi \in C^0 (\partial M)$.

\begin{theorem}
\label{ggq-th-E30}
In addition to \eqref{3I-30}, \eqref{3I-40}, \eqref{3I-20w-Lambda}, 
\eqref{ggq-I50},  
\eqref{ggq-I60Lambda},  \eqref{ggq-G10}, \eqref{ggq-G20} and \eqref{ggq-E10},  assume that
there exists an admissible subsolution $\ul u \in C^0 (\bM)$ satisfying 
\begin{equation}
\label{ggq-I10Ls'}
 \left\{
\begin{aligned}
 f (\Lambda (\sqrt{-1} \partial \bpartial \ul u + X [\ul u])) \geq \,& \psi  [\ul u] \; \mbox{in $\bM$}, \\
\ul u = \varphi \; \mbox{on $\partial M$}
\end{aligned} \right.
\end{equation}
in the viscosity sense.
The Dirichlet problem~\eqref{ggq-I10LG} 
admits a unique admissible solution $u \in C^{\infty} (M) \cap C^0 (\bM)$.
Moreover, $u \in C^{\infty} (\bM)$ if $\varphi \in C^{\infty} (\partial M)$.
\end{theorem}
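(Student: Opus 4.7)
The plan is to solve the Dirichlet problem \eqref{ggq-I10LG} by the continuity method, combining the a priori estimates of Sections~\ref{ggq-C2} and \ref{ggq-G} with an Evans-Krylov type $C^{2,\alpha}$ argument. I would first handle the case where $\varphi$ and the subsolution $\ul u$ are both smooth; the general case $\varphi \in C^0(\partial M)$ with a viscosity subsolution is then recovered by approximating $\varphi$ by $\varphi_k \in C^\infty(\partial M)$ and $\ul u$ by smooth admissible subsolutions (obtained by sup-convolution together with a slight relaxation of the right-hand side), solving the corresponding problems, and passing to the limit on compact subsets using the interior bounds \eqref{ggq-I150.1}--\eqref{ggq-I155.2}.

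In the smooth case I would use the deformation
\[ f(\Lambda(\sqrt{-1}\partial\bpartial u + X[u])) = \psi[u] + (1-t) h, \qquad u\big|_{\partial M} = \varphi, \]
with $h := f(\Lambda(\sqrt{-1}\partial\bpartial \ul u + X[\ul u])) - \psi[\ul u] \geq 0$, so that $u = \ul u$ solves the problem at $t = 0$ and \eqref{ggq-I10LG} is recovered at $t = 1$. Openness of the solvability set in $[0,1]$ follows from the implicit function theorem applied to the linearized operator, which under the monotonicity hypothesis \eqref{ggq-E10} and ellipticity \eqref{3I-20w-Lambda} satisfies the weak maximum principle and so is invertible on functions vanishing on $\partial M$. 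Closedness is equivalent to uniform $C^{2,\alpha}(\bM)$ bounds independent of $t$, and uniqueness of the solution is immediate from the comparison principle supplied by \eqref{ggq-E10}.

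For the a priori estimates I would proceed in the usual order. The $C^0$ lower bound $u \geq \ul u$ is a direct comparison, while the upper bound follows from the growth condition \eqref{ggq-G25} together with a barrier built from $\ul u$. For gradient bounds, the interior estimate is Theorem~\ref{ggq-th-G10}, and the boundary estimate is obtained in the standard way by sandwiching $u$ between $\ul u \pm C v$ for a defining function $v$ of $\partial M$. For second derivatives, the tangential-tangential bound on $\partial M$ comes from $u = \varphi$, the mixed tangential-normal bound from differentiating the boundary condition and applying the subsolution barrier as in \cite{GQY19}, and then the global estimate \eqref{ggq-C2-120} from Theorem~\ref{ggq-th-I20} reduces the remaining double-normal bound to controlling the largest eigenvalue $\fg_{\vn\vn}$ on $\partial M$. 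This last step uses the rank hypothesis \eqref{ggq-I60Lambda} via Lemma~\ref{ggq-lemma-P30} to secure a definite lower bound on a partial sum of $f_{\Lambda_l}$, which combined with the concavity of $f$ and assumption \eqref{ggq-I45} forces an upper bound on $\fg_{\vn\vn}$. Interior $C^{2,\alpha}$ regularity then follows from the Evans-Krylov theorem using the concavity of $f$, with the boundary version supplying the matching estimate up to $\partial M$; higher regularity is classical.

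The main obstacle I foresee is the double-normal boundary second-derivative estimate in the partial-sum setting with only the weak monotonicity \eqref{3I-20w-Lambda}: the classical Trudinger-type argument relies on strict positivity of every $f_i$, and here degeneracy in certain eigendirections must be isolated from the ``non-degenerate'' directions identified by Lemma~\ref{ggq-lemma-P30}. The subsequent approximation step for merely continuous $\varphi$ and viscosity $\ul u$, while technical, is standard once the smooth theory is in place, since the interior estimates are independent of the modulus of continuity of the boundary data.
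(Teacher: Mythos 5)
Your overall scheme (continuity method, the estimates of Sections~\ref{ggq-C2} and \ref{ggq-G}, Evans--Krylov, then approximation for merely continuous data) is the same as the paper's, but there is a genuine gap at the $C^0$ upper bound, and with it the upper half of the boundary gradient estimate. A subsolution only bounds $u$ from \emph{below}; neither $\ul u$ nor the growth condition \eqref{ggq-G25} on $\psi$ produces an upper barrier, and no supersolution is assumed in the hypotheses. The paper's mechanism here is the real content of this step: admissibility forces $\Delta u + \tr X[u] \geq 0$ (because $\sum_I \Lambda_I(\lambda)$ is a positive multiple of $\sum_i \lambda_i$ and $\Gamma$ lies in the half-space where this sum is positive), and by \eqref{ggq-G10} and \eqref{ggq-E10} this yields $\Delta u + \tr X(z, \inf_M \ul u, \partial u, \bpartial u) \geq 0$; the upper bound $u \leq h$ then comes from comparison with the solution $h$ of the auxiliary quasilinear Dirichlet problem \eqref{ggq-E130}, and since $h = \ul u = \varphi$ on $\partial M$ this simultaneously closes the two-sided boundary gradient bound. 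Crucially, the solvability of \eqref{ggq-E130} hinges on the sub-linear growth $\varrho_0 (z,u,t) \rightarrow 0^+$ as $t \rightarrow \infty$ in \eqref{ggq-G10}: Remark~\ref{ggq-remark-E10} shows that with genuinely quadratic gradient growth the auxiliary problem can be unsolvable (a first-eigenvalue obstruction for $\Delta h + |\nabla h|^2 + b = 0$). Your proposed barriers (``built from $\ul u$'', or $\ul u \pm C v$ with $v$ a defining function) would have to be supersolutions of some operator the equation actually controls; under the weak monotonicity \eqref{3I-20w-Lambda} the only such operator is the gradient-dependent Laplace-type inequality above, so you are implicitly confronting exactly the problem \eqref{ggq-E130} without addressing why it is solvable. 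Until this is supplied, the $C^0$ and boundary $C^1$ bounds, and hence the closedness part of your continuity argument, do not close.

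The remainder of your outline is consistent with the paper: the one-sided normal derivative bound from $u \geq \ul u$, the global gradient estimate of Remark~\ref{ggq-remark-G100}, the global and boundary second-order estimates from Theorem~\ref{ggq-th-I20} (with the boundary argument carried out as in \cite{GQY19}), Evans--Krylov from the concavity of $f$, and approximation of $\varphi$ and $\ul u$ using the interior estimates \eqref{ggq-I150.1}--\eqref{ggq-I155.2} for the continuous-data case.
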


\begin{proof}
We assume $\ul u \in C^{\infty} (\bM)$ and $\varphi \in C^{\infty} (\partial M)$; the general case follows from approximation.  
Let $u \in C^{4} (M) \cap C^0 (\bM)$ be an admissible solution of 
problem~\eqref{ggq-I10LG}. 
From the comparison principle and \eqref{ggq-E10} we see that $u \geq \ul u$ on $M$
and therefore
\begin{equation}  
\label{ggq-E30}
 \frac{\partial u}{\partial \nu} \leq \frac{\partial \ul u}{\partial \nu} 
     \;\; \mbox{on $\partial M$} 
  \end{equation}
where $\nu$ denotes the exterior unit normal to $\partial M$. 
By assumptions \eqref{ggq-G10} and \eqref{ggq-E10},
\[ \tr X [u] \leq \tr X (z, \inf_M \ul u, \partial u, \bpartial u) 
     \leq \varrho_1 (z, \inf_M \ul u) + \varrho_0 (z, \inf_M \ul u, |\partial u|) |\partial u|^2. \]
Since $\Delta u + \tr X [u] \geq 0$, we have
\begin{equation}  
\label{ggq-E120}
\Delta u + \tr X (z, \inf_M \ul u, \partial u, \bpartial u) \geq 0. 
 \end{equation}
It follows from the maximum principle that $u \leq h$ on $\bM$ where $h \in C^{\infty} (\bM)$ is the unique solution of   
\begin{equation}  
\label{ggq-E130}
\Delta h + \tr X (z, \inf_M \ul u, \partial h, \bpartial h) = 0 \;\;
    \mbox{in $M$},  \;\; h = \varphi \;\; \mbox{on $\partial M$}.
 \end{equation}
 By the assumption $\varrho_0 = \varrho_0 (z, u, t) \rightarrow 0^+$ as $t \rightarrow \infty$ in \eqref{ggq-G10} we see that problem~\eqref{ggq-E130} is solvable. 
Combining 
with the gradient estimate in Section~\ref{ggq-G}
(see Remark~\ref{ggq-remark-G100}), we have established the global $C^1$ bound 
\begin{equation}  
\label{ggq-E50}
 \sup_{\bM} |u| + \sup_{\bM} |\nabla u| \leq C. 
  \end{equation}
By Theorem~\ref{ggq-th-I20} and Evans-Krylov Theorem we obtain the $C^{2, \alpha}$ estimate
\begin{equation}  
\label{ggq-E60}
 |u|_{C^{2, \alpha} (\bM)} \leq C. 
  \end{equation}
The existence and smoothness of a unique admissible solution now follows from the standard continuity method and the classical Schauder theory for linear uniformly elliptic equations.
\end{proof}

\begin{remark}
\label{ggq-remark-E10}
The assumption $\varrho_0 = \varrho_0 (z, u, t) \rightarrow 0^+$ as $t \rightarrow \infty$ in \eqref{ggq-G10} is critical to the existence of solution of 
problem~\eqref{ggq-E130}.

Consider the problem in a domain $\Omega \subset \bfR^n$
\begin{equation}  
\label{ggq-E135}
\Delta h + |\nabla h|^2 + b = 0 \;\;
    \mbox{in $\Omega$},  \;\; h = 0 \;\; \mbox{on $\partial \Omega$}.
 \end{equation}
Clearly $h$ satisfies $\Delta e^h + b e^h = 0$. Therefore, problem~\eqref{ggq-E135} has no solution if $b$ is the first eigenvalue of $\Omega$. Indeed, let $\phi_1$ be
the first eigenfunction 
\begin{equation}  
\label{ggq-E140}
\Delta \phi_1 + b \phi_1 = 0 \;\;
    \mbox{in $\Omega$},  \;\; \phi_1 = 0 \;\; \mbox{on $\partial \Omega$}, 
 \end{equation}
normalized so that
\[    \sup_{\Omega} \phi_1 = 1. \]
Note that $e^h > 1 \geq \phi_1$ in $\Omega$ since $\Delta e^h < 0$ in $\Omega$. We can find $t > 1$ such that 
\[ 0 < \inf_{\Omega} (e^h - t \phi_1) < 1. \]
Consequently, in the interior point $x_0$ where $e^h - t \phi_1$ attains the minimal value, 
\[ \Delta (e^h - t \phi_1) +b (e^h - t \phi_1) \geq b (e^h - t \phi_1) > 0 \]
which is a contradiction. 

When $u$ is a radial function, in spherical coordinates equation~\eqref{ggq-E135}
reduces to the simple Riccati equation for $y = u'$
\[ y' + \frac{n-1}{r} y + y^2 + b = 0. \]
In the special case $n = 1$ and $b = 1$, the solution is 
$u (x) = - \log \cos x$
which is only defined for $- \frac{\pi}{2} < x < \frac{\pi}{2}$. 
\end{remark}

\bigskip

\bigskip
\small
\bibliographystyle{plain}

\begin{thebibliography}{99}


\bibitem{CNS3}
L. A. Caffarelli, L. Nirenberg and J. Spruck,
{\em The Dirichlet problem for nonlinear second-order elliptic equations
 III: Functions of eigenvalues of the Hessians},
{ Acta Math.} {\bf 155} (1985), 261--301.

\bibitem{CHZ19}
J.-C. Chu, L.-D. Huang and X.-H. Zhu,
{\em The Fu-Yau equation in higher dimensions},
Peking Math. J. {\bf 2} (2019), 71--97.

\bibitem{FGZ}
K. Feng, H.-B. Ge and T. Zheng,
{\em The Dirichlet problem of fully nonlinear elliptic equations on Hermitian manifolds},
arXiv:1905.02412.

\bibitem{FWW11}
 J.-X. Fu, Z.-Z. Wang and D.-M. Wu, 
{\em Form-type Calabi-Yau equations},
 Math. Res. Lett. {\bf 17} (2010), 887--903.

\bibitem{FWW15}
 J.-X. Fu, Z.-Z. Wang and D.-M. Wu, 
{\em Form-type equations on K\"ahler manifolds of nonnegative orthogonal bisectional curvature}
 Calc. Var. PDE {\bf 52} (2015), 327--344.

\bibitem{FY07}
 J.-X. Fu and S.-T. Yau, 
{\em A Monge-Amp\`ere-type equation motivated 
by string theory},
Comm. Anal. Geom. {\bf 15} (2007), 29--75. 

\bibitem{FY08}
 J.-X. Fu and S.-T. Yau, 
{\em The theory of superstring with flux on 
non-K\"ahler manifolds and the complex 
Monge-Amp\`ere equation},
 J. Differential Geom. {\bf 78} (2008), 369--428.


 \bibitem{Gauduchon84}
P. Gauduchon,
{\em La 1-forme de torsion d\'une vari\'et\'e hermitienne compacte}
Math. Ann. {\bf 267} (1984), 495--518.

\bibitem{Guan14}
B. Guan,
{\em Second order estimates and regularity for fully nonlinear elliptic equations on Riemannian manifolds},
Duke Math. J. {\bf 163} (2014), 1491--1524. 

\bibitem{Guan}
B. Guan,
{\em The Dirichlet problem for fully nonlinear elliptic equations on Riemannian manifolds},
arXiv:1403.2133v2.

\bibitem{GN}
B. Guan and X.-L. Nie,
{\em Second order estimates for fully nonlinear elliptic equations with
gradient terms on Hermitian manifolds}, 
arXiv:2108.03308.

\bibitem{GQY19}
B. Guan, C.-H. Qiu and R.-R. Yuan,
{\em Fully nonlinear elliptic equations for conformal deformations of Chern-Ricci forms},  
Adv. Math. {\bf 343} (2019), 538--566.

\bibitem{GW03a}
P.-F. Guan and G.-F. Wang,
{\em Local estimates for a class of fully nonlinear equations arising from conformal geometry},
Int. Math. Res. Not. {\bf 2003}, no.26, 1413--1432.

\bibitem{HL11}
R. Harvey and H. B. Lawson Jr., 
{\em Plurisubharmonicity in a general geometric context},
Geometry and Analysis. No. 1, 363--402, Adv. Lect. Math. (ALM), 17, Int. Press, Somerville, MA, 2011.

\bibitem{HL12}
R. Harvey and H. B. Lawson Jr., 
{\em Geometric plurisubharmonicity and convexity: an introduction},  
Adv. Math. {\bf 230} (2012), 2428--2456.

\bibitem{HL13} 
R. Harvey and H. B. Lawson Jr., 
{\em p-convexity, p-plurisubharmonicity and the Levi problem}, 
Indiana Univ. Math. J. {\bf 62} (2013), 149--169. 

\bibitem{LT1994}
M. Lin. and N. Trudinger,
{\em On some inequalities for elementary symmetric functions},
Bull. Austral. Math. Soc. {\bf 50} (1994), 317--326.

\bibitem{PPZ16}
D. H. Phong, S. Picard and X.-W. Zhang,
{\em On estimates for the Fu-Yau generalization of a Strominger system},
 J. Reine Angew. Math. (2016), 1--32. 

\bibitem{PPZ17}
D. H. Phong, S. Picard and X.-W. Zhang,
{\em The Fu-Yau equation with negative slope parameter},
Invent. Math. {\bf 209} (2017), 541--576. 

\bibitem{PPZ21}
D. H. Phong, S. Picard and X.-W. Zhang,
{\em Fu-Yau Hessian equations},
J. Differential Geom. {\bf 118} (2021), 147--187. 

\bibitem{ST11}
J. Streets and G. Tian,
{\em Hermitian curvature flow},
J. Eur. Math. Soc. {\bf 13} (2011), 601--634.

\bibitem{Szekelyhidi18}
G. Sz\'ekelyhidi,
{\em Fully non-linear elliptic equations on compact Hermitian manifolds},
J. Differential Geom. {\bf 109} (2018), 337--378. 

\bibitem{STW17}
G. Sz\'ekelyhidi, V. Tosatti and B. Weinkove, 
{\em Gauduchon metrics with prescribed volume form},
 Acta Math. {\bf 219} (2017), 181--211.

\bibitem{TW2017}
V. Tosatti, B. Weinkove,
{\em The Monge-Amp\`ere equation for (n-1)-plurisubharmonic functions
on a compact K\"ahler manifold},
J. Amer. Math. Soc. {\bf 30} (2017), 311--346.


\end{thebibliography}

\end{document}